\newcommand{\nn}{{\mathbb N}}
\newcommand{\cc}{{\mathbb C}}
\theoremstyle{plain}
\newtheorem{theorem}{Theorem}
\newtheorem{proposition}{Proposition}
\theoremstyle{definition}
\newtheorem{example}[theorem]{Example}
\newcommand{\beq}{\begin{eqnarray*}}
	\newcommand{\feq}{\end{eqnarray*}}
\newcommand{\beqn}{\begin{eqnarray}}
	\newcommand{\feqn}{\end{eqnarray}}
\def\T{\mathcal{T}}
\def\L{\mathcal{L}}
\def\D{\mathcal{D}}
\def\M{\mathcal{M}}
\def \R {\mathcal{R}}
\begin{document}
	\title[Subregularity in infinitely labeled generating trees]{Subregularity in infinitely labeled generating trees of restricted permutations}
	\author[T. Mansour]{Toufik Mansour}
	\address{Department of Mathematics, University of Haifa,
		3498838 Haifa, Israel}
	\email{tmansour@univ.haifa.ac.il}
	\author[R. Rastegar]{Reza Rastegar}
	\address{Occidental Petroleum Corporation, Houston, TX 77046 and Departments of Mathematics and Engineering, University of Tulsa, OK 74104, USA}
	\email{reza\_rastegar2@oxy.com}
	\author[M. Shattuck]{Mark Shattuck}
	\address{Department of Mathematics, University of Tennessee,
		Knoxville, TN 37996, USA}
	\email{shattuck@math.utk.edu}

	\begin{abstract}
		In this paper, we revisit the application of generating trees to the pattern avoidance problem for permutations. In particular, we study this problem for certain general sets of patterns and propose a new procedure leveraging the FinLabel algorithm and exploiting the subregularities in the associated generating trees.  We consider some general kinds of generating trees for which the FinLabel algorithm fails to determine in a finite number of iterations the generating function that enumerates the underlying class of permutations.  Our procedure provides a unified approach in these cases leading to a system of equations satisfied by a certain finite set of generating functions which can be readily solved with the aid of programming.
	\end{abstract}
	\subjclass[2010]{05A15, 05A05}
	\keywords{pattern avoidance, generating tree, directed graph, generating function}
	
	\date{\today}
	
	\maketitle

	\section{Introduction}

	The study of pattern avoidance in permutations has been an object of ongoing interest to combinatorists over the past few decades. See, for example, the text \cite{kitaev1} for a general review of main results, techniques and directions. We seek a general procedure for enumerating broad classes of pattern restricted permutations.  Previously, an automatic approach to counting members of an avoidance class via enumeration schemes was initiated by Zeilberger \cite{Zeil} and later applied to a variety of problems (see \cite{Bax1,Bax2,BP,Pud} and references contained therein).   In \cite{BNZ}, further algorithms were found that derived functional equations for the generating functions automatically which enabled polynomial-time enumeration for a set of consecutive patterns. In \cite{NZ}, the more general problem of counting permutations according to the number occurrences of a pattern was undertaken using an automatic approach and the problem for patterns of length three was considered in detail.

Here, we revisit the classical avoidance problem for various sets of patterns and provide a somewhat general procedure through an in-depth analysis of certain kinds of generating trees. Recall that each node of a generating tree corresponds to a combinatorial object, and the branch leading to a node encodes a particular choice made in the construction of the object. Certain families of combinatorial objects admit a recursive description in terms of generating trees \cite{bar0, bar1, bar2, CGHK}, which frequently leads to the enumeration of the object in question, related explicit formulas and efficient random generation algorithms \cite{M2}.

Generating trees were first utilized in the enumeration of subclasses of permutations by West \cite{W1, W2} in the context of pattern avoidance and have been further exploited in closely related problems \cite{bar3}.
Later in \cite{V2,V1}, the generating tree idea was developed in the context of restricted permutations and powerful algorithms were found that can automatically produce the related generating functions, in particular, in the case when the associated generating trees are of finite size. Herein, we demonstrate how to extend these algorithms applicable only to finitely labeled generating trees to study several families in which they are infinitely labeled.  This is achieved through a more in-depth understanding of a form of {\em subregularity} in the tree structure.

	We put forth in this section a preliminary discussion following \cite{V2} and demonstrate how the enumeration of permutations avoiding a given set of patterns can be described in terms of counting paths within the corresponding generating tree. A few definitions are in order. Recall that a {\em generating tree} is a rooted, labeled tree such that the label of a node determines the labels of its children, if any. The nature of the labels of the nodes is immaterial and, as can be seen in our context, we use permutations to label the nodes. To specify a generating tree, it suffices to identify: (i) the label of the root, and (ii) a set of succession rules of the form
	\beq
	(l)\rightsquigarrow(l_1),(l_2),\ldots,(l_s)
	\feq
	describing how to label the nodes connected by the edges emanating from a node with label $(l)$ using the labels $(l_1),(l_2),\ldots,(l_s)$. One often refers to the label on the left side of the rule as the \emph{parent} and the labels on the right as the \emph{children}, with these terms applying to the nodes themselves as well. For example, each node in an infinite complete binary tree has two children, hence it is enough to use only one label, say $(2)$, leading to the following description:
	\begin{itemize}
		\item[]  {Root}: $(2)$
		\item[]  {Rule}: $(2)\rightarrow(2)(2)$.
	\end{itemize}
	\par
	We use the standard notation $\nn$ and $\cc$ to refer to the sets of natural and complex numbers, respectively. Let $[n]=\{1, \ldots, n\}$ for $n\in \nn$, with $[0]=\varnothing$. Also, for any word $\sigma$ of length $n\in \nn$, let $\sigma(i)$ represent its $i$-th entry for $i\in [n]$. Similarly, for any matrix $\M$, $\M(i,j)$ will denote its $(i,j)$-th entry.
	A \emph{permutation} of the set $[n]$ is any arrangement of the elements of $[n]$. We denote the set of all permutations of $[n]$ by $S_n$ and let $S:=\cup_{j\geq 1} S_j$ be the set of all permutations of finite length. Throughout this paper, for any $\pi\in S_n$, $|\pi|:=n$ refers to the length of the permutation $\pi$. For $\tau=\tau(1)\tau(2)\cdots\tau(k)\in S_k$ and $\sigma=\sigma(1)\sigma(2)\cdots\sigma(n)\in S_n$, we say that the permutation $\sigma$ \emph{contains} $\tau$ as a \emph{pattern} if there exist indices $1\leq i_1 <i_2<\cdots<i_k\leq n$ such that $\sigma(i_a)<\sigma(i_b)$ if and only if $\tau(a)<\tau(b)$ for all $1\leq a,b\leq k$. Otherwise, it is said that $\sigma$ \emph{avoids} $\tau$. We denote the set of all permutations in $S_n$ that avoid the pattern $\tau$ by $S_n(\tau)$, and similarly, define $S(\tau):=\cup_{j\geq1} S_j(\tau)$ as the set of all permutations avoiding $\tau$. More generally, for a set $1\notin B \subset S$ of patterns, we use the notation $S_n(B):=\cap_{\tau\in B}S_n(\tau)$ and $S(B):=\cap_{\tau\in B}S(\tau)$ to refer to the set of permutations of a given length or of any length, respectively, avoiding all patterns in the set $B$. Our interest here is to find the number of permutations in  $S_n(B)$, i.e., $|S_n(B)|$, or equivalently to study the corresponding generating function
	\beq
	G_B(x) := \sum_{n\geq1}|S_n(B)|x^n, \qquad   x\in \cc.
	\feq

To establish a useful connection between generating trees and the avoidance problem in permutations, we define a pattern-avoidance tree $\T(B)$ for a given set of patterns $B$ as follows. The tree $\T(B)$ is understood to be empty if there is no permutation of arbitrary length avoiding the set $B$.  Otherwise, $1\notin B$ and the root can always be taken as $1$, i.e., $1\in \T(B)$. Starting with this root, the remainder of the  tree $\T(B)$ can then be constructed in a recursive manner. To this end, we let the $n$-th level of the tree consist precisely of the elements of $S_n(B)$ arranged in such a way that the parent of a permutation $\pi:=\pi(1)\cdots\pi(n) \in S_n(B)$ for which $\pi(j)=n$ for some $1 \leq j \leq n$ is the unique permutation $\pi':=\pi'(1)\cdots\pi'(n-1)\in S_{n-1}(B)$ where $\pi'(i)=\pi(i)$ for $i \in [j-1]$ and $\pi'(i)=\pi(i+1)$ for $i \in [j,n-1]$. See Figure \ref{figT123a} for the first few levels of $\T(\{123\})$. A simple but important observation is that the size of $S_n(B)$ is equal to the number of nodes in the $n$-th level of $\T(B).$
	\begin{figure}[htp]
		{\footnotesize
			\begin{forest}
				for tree={fit=band,}
				[1[21,[321,[4321] [3421] [3241] [3214]] [231,[4231] [2431]] [213,[4213], [2413],[2143]]] [12,[312,[4312] [3412] [3142]] [132,[4132] [1432]]]]
		\end{forest}}
		\caption{First four levels of $\T(\{123\})$}\label{figT123a}
	\end{figure}
	
	Hence, we focus on an understanding of the nature of this tree, more specifically, {\em subregular structures} contained within it. More precisely, let $\T(B;\pi)$ denote the subtree consisting of $\pi$ and its descendants in $\T(B)$. For any $1 \leq m <n\in \nn$, we say that the node (labeled by) $\pi\in S_n(B)$ is {\em reducible} to the node $\pi'\in S_m(B)$ if the subtrees starting from $\pi$ and $\pi'$ are isomorphic, i.e., $\T(B;\pi)\cong\T(B;\pi')$. For instance, it is seen that $\T(\{123\};12)\cong\T(\{123\};1)$ and $\T(\{123\};312)\cong\T(\{123\};21)$. Suppose $t$ is the length of the longest pattern in $B$. Then, from \cite{V1}, we have $\T(B;\pi)\cong\T(B;\pi')$ for $\pi, \pi' \in S(B)$ if and only if, for each $1\leq j\leq t$, the number of nodes in the $j$-th level of subtree $\T(B;\pi)$ is equal to the number of nodes in the $j$-th level of subtree $\T(B;\pi')$.
	
	Now, based on this subregularity concept, we form the tree denoted by $\T[B]$ which is an isomorphic copy of $\T(B)$ wherein the nodes belonging to the same irreducible class are labeled the same. Clearly, $\T[B]$ is a generating tree whose labels correspond exactly to the isomorphism classes of $\L[B]:=\{\T(B; \pi)|\pi \in S(B)\}$. We let $\R[B]$ denote the set of succession rules for this generating tree. For instance, the first few levels of $\T[\{123\}]$ are given in Figure \ref{figT123b}.
		\begin{figure}[htp]
		{\footnotesize
			\begin{forest}
				for tree={fit=band,}
				[1[21,[321,[4321] [1] [21] [321]] [1,[21] [1]] [21,[321] [1 21]]] [1,[21,[321] [1] [21]] [1,[21] [1]]]]
		\end{forest}}
		\caption{First four levels of $\T[\{123\}]$}\label{figT123b}
	\end{figure}
	
	For any generating tree $\T[B]$, we define the directed graph $\D[B]$ whose vertices correspond to the set of all isomorphism classes of labels in $\T(B)$. An edge from the label $\alpha$ to the label $\beta$ exists if and only if the rule $\alpha\rightsquigarrow \beta$ belongs to the set of succession rules $\R[B]$.  For instance, the graph $\D[\{123\}]$ is depicted in Figure \ref{figT123c}. Note that multiple edges occurring between $\alpha$ and $\beta$ corresponds to the case when $\beta$ arises more than once as a child of $\alpha$.
	\begin{figure}[htp]
		\begin{picture}(0,40)(70,-10)
			\put(0,0){$1$}\put(30,0){$21$}\put(65,0){$321$}
			\put(110,0){$4321$}\put(7,3){\vector(1,0){20}}\put(27,6){\vector(-1,0){20}}
			\put(41,3){\vector(1,0){20}}\put(61,6){\vector(-1,0){20}}
			\put(84,3){\vector(1,0){20}}\put(104,6){\vector(-1,0){20}}\put(135,3){$\ldots$}
			\put(0,2){\qbezier(6,7)(3,28)(-2,7)\put(-1,7){\vector(0,-1){1}}}
			\put(32,2){\qbezier(6,7)(3,28)(-2,7)\put(-1,7){\vector(0,-1){1}}}
			\put(70,2){\qbezier(6,7)(3,28)(-2,7)\put(-1,7){\vector(0,-1){1}}}
			\put(120,2){\qbezier(6,7)(3,28)(-2,7)\put(-1,7){\vector(0,-1){1}}}
			\qbezier(70,-2)(37,-6)(4,-2)\put(4,-2){\vector(-1,0){1}}
			\qbezier(120,-2)(62,-11)(4,-5)\put(4,-5){\vector(-1,0){1}}

\qbezier(120,-2)(76,-8)(34,-2)\put(34,-2){\vector(-1,0){1}}
		\end{picture}
		\caption{Directed graph $\D[\{123\}]$}\label{figT123c}
	\end{figure}
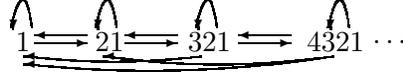	

	We equip the set $\L[B]$ of all isomorphism classes of labels with the lexicographical ordering wherein each permutation of length $k$ appears before all permutations of length $k+1$ for all $k$. For example, the nodes of $\D[\{123\}]$ are ordered as $1,21,321,\ldots$\,. We then define $\M[B]$ for the graph $\D[B]$ as the matrix whose entries are given by $\M[B](v,w)=s$ for all $v,w \in \D[B]$, where $s$ is the number of edges from $v$ to $w$. $\M[B]$ is referred to as the \emph{transition matrix} of the graph $\D[B]$ and clearly has non-negative integral entries.  For instance, $\M[\{123\}]$ is given by the following infinite matrix
	\beq
	\M[\{123\}] = \left(\begin{array}{llllll}
		1&1&0&0&0&\cdots\\
		1&1&1&0&0&\cdots\\
		1&1&1&1&0&\cdots\\
		1&1&1&1&1&\cdots\\
		\vdots&\vdots&\vdots&\vdots&\ddots
	\end{array}\right).
	\feq
	
	It is seen that the number of permutations in $S_n(B)$ is equal to the number of paths of length $n-1$ starting at the node $1$ in the graph $\D[B]$. Hence, the transfer-matrix method \cite[Theorem 4.7.2]{Stanley1} implies the generating function $G_B(x)$ is given by
	\beqn \label{general_tm}
	G_B(x)=(1,0,0,\ldots)\sum_{n\geq1}(\M[B])^{n-1}x^n(1,1,1,\ldots)^t,
	\feqn
	where $v^t$ denotes the transpose of the vector $v$. If the set of all isomorphism classes $\{\T(B; \pi) | \pi \in S(B)\}$ is finite (i.e., if $\M[B]$ is finite), then \eqref{general_tm} implies the generating function $G_B(x)$ is rational and equal to
	\beqn \label{finite_tm}
	G_B(x)=x(1,0,0,\ldots)(I-x\M[B])^{-1}(1,1,1,\ldots)^t.
	\feqn	
In \cite{Kremer1} and \cite{V1}, it was shown that the set of isomorphism classes is finite if and only if $B$ contains both a child of an increasing permutation and a child of a decreasing permutation. Furthermore, a Maple package has been developed (described in \cite{V1} and available at http://math.rutgers.edu/$\sim$vatter/) that finds the generating function in this case. We will refer to this package (algorithm) as the FinLabel algorithm throughout this paper.
	
	We close this section with a simple finite case example.
	
	\begin{example}\label{exab1}
		Let $B=\{123,43215\}$. Then the rules of $\T[B]$ are given by
		\beq
		\R[B] &=& \{ 1\rightsquigarrow1,21 \} \\
		&&  \cup \{ 21\rightsquigarrow1,21,321 \} \\
		&& \cup \{ 321\rightsquigarrow1,21,321,321 \},
		\feq
		with the root $1$. Thus $\D[B]$ is as presented in Figure \ref{figTexa}.
		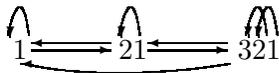
\begin{figure}[htp]
			\begin{picture}(0,50)(70,-10)
				\put(0,0){$1$}\put(40,0){$21$}\put(85,0){$321$}
				\put(7,3){\vector(1,0){30}}\put(37,6){\vector(-1,0){30}}
				\put(51,3){\vector(1,0){30}}\put(81,6){\vector(-1,0){30}}
				\put(0,2){\qbezier(6,7)(3,28)(-2,7)\put(-1,7){\vector(0,-1){1}}}
				\put(42,2){\qbezier(6,7)(3,28)(-2,7)\put(-1,7){\vector(0,-1){1}}}
				\put(90,2){\qbezier(6,7)(3,28)(-2,7)\put(-1,7){\vector(0,-1){1}}}
				\put(94,2){\qbezier(6,7)(3,28)(-2,7)\put(-1,7){\vector(0,-1){1}}}
				\qbezier(82,-2)(37,-9)(4,-2)\put(4,-2){\vector(-1,0){1}}
			\end{picture}
			\caption{Directed graph $\D[\{123,43215\}]$}\label{figTexa}
		\end{figure}

		The matrix $\M[B]$ is given by
		\beq
		\M[B]=\left(\begin{array}{lll}
			1&1&0\\
			1&1&1\\
			1&1&2
		\end{array}\right).
		\feq
		Hence, by \eqref{finite_tm}, the generating function $G_B(x)$ is equal to
		\beq
		x(1,0,0)(I-x\M[B])^{-1}(1,1,1)^t=\frac{x(1-2x)}{(1-x)(1-3x)},
		\feq
		as expected (see Theorem 3.1 in \cite{CW}).
	\end{example}
	
	However, when the matrix $\M[B]$ is infinite, the evaluation of $G_B(x)$ can be an intricate task which we will focus on in this paper.  In the next section, we develop an algorithm for computing $\T[B]$, $\D[B]$ and $\M[B]$ after finitely many iterations which is applicable to cases when $\M[B]$ is infinite.  In the third section, we apply this algorithm together with a simple general enumerative result to deduce $G_B(x)$ for several classes of pattern sets $B$ for which $\D[B]$ belongs to one of three general families of graphs.
	
	\section{Infinite size $\M[B]$: subregularity structures}
	
	To study $\M[B]$ of infinite size, we define $P_{n}(B; \pi)$ to be the number of nodes at the $n$-th level of $\T(B;\pi)$. Let $F_\pi(x)$ be given by
	\beqn \label{ss}
	F_\pi(x) := x^{|\pi|-1} \sum_{n=1}^\infty P_{n}(B; \pi) x^n.
	\feqn
	In words, $F_\pi(x)$ is the generating function that enumerates the paths beginning with the root $\pi$ of $\T(B;\pi)$. Clearly, $G_B(x) = F_1(x)$.  Note that $F_\pi(x)$ is analytic for all $\pi$ in some interval containing zero dependent upon $B$.  For any rule $v\rightsquigarrow w_1w_2\cdots w_s$, we have
	\beqn \label{PnBv}
	P_n(B;v) = \begin{cases}
		1, &\quad\text{if}\quad n=1;\\
		\sum_{j=1}^s P_{n-1}(B;w_j), &\quad\text{if}\quad  n\geq 2.
	\end{cases}
	\feqn
	Therefore, substituting \eqref{PnBv} into \eqref{ss} implies
	\beqn \label{F_rule}
	F_v(x)&=& x^{|v|} + x^{|v|-1} \sum_{n=2}^\infty \left( \sum_{j=1}^s P_{n-1}(B;w_j) \right)x^n \notag \\
	&=& x^{|v|} + \sum_{j=1}^s x^{|v|+1-|w_j|}F_{w_j}(x),
	\feqn
	where we exchange sums and apply definition \eqref{ss} to $w_j$ in obtaining the second equality.

	Our general approach will be to rewrite the set of succession rules as a set of equations of the form \eqref{F_rule} and then obtain information on $F_1(x)$.  We begin with a simple example to illustrate the general idea.
	
	\begin{example}\label{123,132}
		Let $B=\{123,132\}$. Here, the generating tree $\T[B]$ has the rules
		\beqn
		\R[B]&:=& \{1 \rightsquigarrow 21,12\} \label{ex2_rule1}\\
		&& \cup \{k(k-1)\cdots21\rightsquigarrow(12)^k,(k+1)k\cdots21\mid  k \geq 2\} \label{ex2_rule2} \\
		&& \cup \{k(k-1)\cdots312\rightsquigarrow(12)^{k-2},(k+1)k\cdots312\mid \  k \geq 2 \} \label{ex2_rule3},
		\feqn
		with the ordered set
		$$
		\mathcal{L}[B]=\{1,21,12,321,312,4321,4312,\ldots\},
		$$
		and the infinite matrix $\M[B]$ given by
		$$\M[B]=\left(\begin{array}{llllllllll}
			0&1&1&0&0&0&0&0&0&\cdots\\
			0&0&2&1&0&0&0&0&0&\\
			0&0&0&0&1&0&0&0&0&\\
			0&0&3&0&0&1&0&0&0&\cdots\\
			0&0&1&0&0&0&1&0&0&\\
			0&0&4&0&0&0&0&1&0&\cdots\\
			0&0&2&0&0&0&0&0&1&\\
			\vdots&&&\vdots&&&&\vdots
		\end{array}\right).$$

		Since $\M[B]$ is not finite, the FinLabel algorithm fails to count the members of $S_n(B)$.		
		However, we can describe $\M[B]$ as an infinite system of equations as follows. Observe that, by \eqref{F_rule}, the rule \eqref{ex2_rule1} can be written as $F_1(x)=x+F_{21}(x)+F_{12}(x)$. Similarly, we can write rules \eqref{ex2_rule2} and \eqref{ex2_rule3} as
		\beqn
		&&F_{k(k-1)\cdots1}(x)=x^k+kx^{k-1}F_{12}(x)+F_{(k+1)k\cdots1}(x),\qquad k\geq2, \label{exm2-1}
		\feqn
		and
		\beqn
		&&F_{k(k-1)\cdots312}(x)=x^k+(k-2)x^{k-1}F_{12}(x)+F_{(k+1)k \cdots 312}(x), \qquad k\geq2. \label{exm2-2}
		\feqn		
		Consider the set of equations \eqref{exm2-1} for all $k\geq 2$. By addition of the left and right sides of all the equations, and cancellation of like terms on both sides, we obtain
		\beqn \label{exm2-eq1}
		&&  F_{21}(x)=\frac{x^2}{1-x}+F_{12}(x)\sum_{k\geq2}kx^{k-1}.
 \feqn
		In a similar manner, \eqref{exm2-2} implies
		\beqn \label{exm2-eq2}
		&& F_{12}(x)=\frac{x^2}{1-x}+F_{12}(x)\sum_{k\geq2}(k-2)x^{k-1}.
		\feqn
		Solving the system \eqref{exm2-eq1} and \eqref{exm2-eq2} for $F_{12}(x)$ and $F_{21}(x)$ yields
		\beq
		F_{12}(x)=\frac{x^2(1-x)}{1-2x}\mbox{ and }F_{21}(x)=\frac{x^2(1+x)}{1-2x}.
		\feq
		Hence, we have
		\beq
		G_B(x)=F_1(x)= x+F_{12}(x)+F_{21}(x) = \frac{x}{1-2x},
		\feq
	which is in accordance with \cite{SS}.
\end{example}

	Since the determination of the rule set $\R[B]$ is the key to our process, we will formulate a simple algorithm which draws upon and extends FinLabel as follows. To this end, we say that a set $R$ of $m$ rules constructs a general rule with index $k$ if the general rule with $k=1,2,\ldots,m$ gives exactly all the rules in the set $R$. For example, the set $$R=\{1\rightsquigarrow1,12;\, 12\rightsquigarrow1^2,123;\, 123\rightsquigarrow1^3,1234;\,\ldots; 12\cdots 50\rightsquigarrow1^{50},12\cdots51\}$$ of $50$ rules constructs the general rule $12\cdots k\rightsquigarrow 1^k,12\cdots(k+1)$. The question is how many rules should be included in $R$ so that the correct corresponding general rule can be extracted. The next observation implies that we only need a finite number of rules to construct a general rule.
	
	\begin{proposition}\label{prov2}
		Let $B\subset S$ be any finite set of patterns and let $t$ be the length of its longest pattern. Suppose $R$ is a set of $m$ rules that is a subset of the rules of the tree $\T[B]$ which constructs a general rule parameterized by index $k$. If $m\geq t-1$, then the general rule holds in $\T[B]$ for all $k$.
	\end{proposition}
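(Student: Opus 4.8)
The plan is to reduce the statement to a question about finitely many level counts and then close an induction on the index $k$. By the characterization from \cite{V1} recalled above, the isomorphism class of $\T(B;\pi)$, and therefore the succession rule attached to the node $\pi$ in $\T[B]$ (i.e., the multiset of labels of its children, since isomorphic subtrees carry the same multiset of child-subtree classes), is completely determined by the vector of level counts $(P_1(B;\pi),\dots,P_t(B;\pi))$; as $P_1(B;\pi)=1$ always, this is really a vector of $t-1$ numbers $(P_2(B;\pi),\dots,P_t(B;\pi))$. Thus proving that the general rule holds for all $k$ is equivalent to proving that, for every $k$, the node playing the role of the parent in the $k$-th instance has the level-count vector predicted by the formula, since that vector then forces both the number of children and each child's label.

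Next I would exploit the shape of a general rule: the parent $p(k)$ of the $k$-th instance has, among its children, the parent $p(k+1)$ of the $(k+1)$-st instance, the remaining children being a fixed collection of already-closed labels whose entire subtrees are known (this is the situation in all the relevant examples). Hence the parents $p(1),p(2),\dots$ form a ray in $\T[B]$, and I would argue by induction on $k$ that the rule at $p(k)$ is exactly the $k$-th instance of the formula. The base cases $k=1,\dots,m$ are the hypothesis that the rules of $R$ lie in $\T[B]$. For the inductive step I would apply the level-count recursion \eqref{PnBv} at $p(k-1)$, whose rule is known by induction: writing $P_n(B;p(k-1))$ as the sum of $P_{n-1}$ over its children and isolating the ray-child $p(k)$ yields, for $2\le n\le t$, an expression for $P_{n-1}(B;p(k))$ in terms of $P_n(B;p(k-1))$ and the known level counts of the closed children. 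This recovers $P_1(B;p(k)),\dots,P_{t-1}(B;p(k))$, hence all but the top entry of the level-count vector of $p(k)$.

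The crux, and the step I expect to be the main obstacle, is recovering the missing top count $P_t(B;p(k))$ and showing it agrees with the formula; this is exactly where the hypothesis $m\ge t-1$ enters. The recursion above loses one level at each step along the ray, so a single step cannot produce $P_t(B;p(k))$ from the first $t$ counts of $p(k-1)$ alone. The way around this is to show that the first $t$ level counts of $p(k)$ are controlled by a bounded window of the permutation near its active region: because every pattern in $B$ has length at most $t$, only the top $t-1$ inserted maxima, together with a bounded number of entries of $p(k)$, can participate in an occurrence of a forbidden pattern within the first $t$ levels of $\T(B;p(k))$. Consequently, once the regular part of $p(k)$ that is being extended exceeds this window---which happens for $k$ beyond a threshold of at most $t-1$---the first $t$ level counts depend on $k$ only through the explicit formula. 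The bound $m\ge t-1$ guarantees that the verified instances already reach this stable regime, so the top counts are pinned down consistently for all larger $k$; the induction then closes, and by the determination noted in the first paragraph the children (labels and multiplicities) match the formula for every $k$, which is the assertion of the proposition.
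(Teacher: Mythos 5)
Your opening reduction --- that the succession rule attached to a node $\pi$ of $\T[B]$ is determined by the vector $(P_1(B;\pi),\ldots,P_t(B;\pi))$, via the characterization of subtree isomorphism from \cite{V1} --- is in fact the entirety of the paper's proof: the authors simply assert that the proposition is equivalent to that characterization and stop. Up to that point you and the paper coincide exactly.

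The two further paragraphs, in which you try to actually close the induction along the ray $p(1),p(2),\ldots$, go beyond what the paper offers, and it is there that a genuine gap appears --- one you yourself flag as the crux. The ``bounded window'' mechanism invoked to pin down the missing count $P_t(B;p(k))$ does not hold as stated: it is not true that only boundedly many entries of $p(k)$ can participate in an occurrence of a forbidden pattern within the first $t$ levels. For $B=\{123\}$ and $p(k)=k(k-1)\cdots 1$, every entry of $p(k)$ is available to complete an occurrence of $123$ together with two inserted maxima, and correspondingly $P_2(B;p(k))=k+1$ grows without bound; a genuinely bounded-window argument would force the first $t$ level counts to be eventually constant in $k$, which they are not. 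What is actually needed is that these counts, as functions of $k$, eventually lie in a family determined by $t-1$ samples (in practice, that they are eventually affine in $k$, driven by the regular extension of a monotone block $m(m+1)\cdots m'$ or $m'(m'-1)\cdots m$ in the labels produced by Step (III) of the algorithm); your write-up asserts this stabilization and the threshold $t-1$ rather than deriving them. To be fair, the paper's one-sentence proof does not supply this either, but since your induction explicitly rests on that step, the argument remains incomplete as written.
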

	\begin{proof}
		This is equivalent to the statement that for any two given permutations $\pi, \pi' \in S(B)$, we have $\T(B;\pi)\cong\T(B;\pi')$ if and only if for each $1\leq j\leq t$, the number of nodes at the $j$-th level of subtree $\T(B;\pi)$ is equal to the number of nodes of the $j$-th level of subtree $\T(B;\pi')$.
	\end{proof}
	
	Using this proposition, we can guarantee that the following procedure calculating $\T[B]$, $\D[B]$ and $\M[B]$ exits after finitely many iterations. The procedure is comprised of two main parts. For the first part, we find the set of rules of the generating tree $\T[B]$. To that end, Step (II) proposes to use a small modification of the algorithm FinLabel from \cite{V1}, where the algorithm stops after $D$ units of time and outputs the set $\mathcal{R}_D$. Next, step (III) searches for a set of rules constructing a general one by looking at rules with an offspring of form either $\alpha m(m+1)\cdots m'\beta\in S_{m'}$ or $\alpha m'(m'-1)\cdots m\beta\in S_{m'}$ for some $m'>m$. Then Step (IV) verifies whether or not we obtained exactly the set $\mathcal{R}[B]$. If not, then the algorithm increases the value of $D$ and reruns Steps (II) and (III). If yes, then we have successfully found the generating tree $\T[B]$. As seen below, $D$ depends upon the pattern set $B$ and theoretically might need to be quite large, which would preclude the possibility of simply guessing the set of rules. After calculating $\T[B]$, the algorithm proceeds with the second part in which the generating function $G_B(x)$ is computed.
	
	\begin{algorithm}
		\caption{Calculating $\R[B]$ and $\D[B]$}\label{alg1}
		\begin{algorithmic} [hbt!]
			\State {\bf (I): Input}  Let $1\notin B\subset S$ be any set of patterns and $D\geq2$.
			\State {\bf (II)}: Let $P=\{1\}$ and $\mathcal{R}_D=\emptyset$ as specified by the FinLabel algorithm
 given in \cite{V1}. Then run the FinLabel algorithm $D$ iterations to update $P$ and $\mathcal{R}_D$.		
			\State {\bf (III)}:  Let  $\mathcal{R}':=\mathcal{R}_D[B]$. Construct one or more general rules from
 $R\subseteq\mathcal{R}'$ through use of Proposition \ref{prov2}; then remove $R$ from $\mathcal{R}'$ and replace with general rule(s).
\State {\bf (IV)}: Using Proposition \ref{prov2} and induction, we attempt to show
$\mathcal{R}'=\mathcal{R}[B]$. If successful, proceed to {\bf (V)}.   Otherwise, increase $D$ by one and return to {\bf (II)}.
			\State {\bf (V)}: Find $\D[B]$ and the associated matrix $\M[B]$ for $\T[B]$.
		\end{algorithmic}
	\end{algorithm}
	
	We illustrate how the algorithm works with the following two examples.
	
	\begin{example}
		Let $B=\{123,132\}$ and $D=200$. Note that step (II) gives the set of rules $\mathcal{R}_D=\{1\rightsquigarrow21,12;\,12\rightsquigarrow312;\,21\rightsquigarrow12^2,321;\,
		312\rightsquigarrow12,4312;\,321\rightsquigarrow12^3,4321;\,4312\rightsquigarrow12^2,54312;
		\,4321\rightsquigarrow12^4,54321;\ldots\}$. Then step (III) outputs \begin{align*}
			\mathcal{R}[B]&=\{k(k-1)\cdots312\rightsquigarrow12^{k-2},(k+1)k\cdots312\mid k\geq2\}\\
			&\cup\{k(k-1)\cdots21\rightsquigarrow12^k,(k+1)k\cdots21\mid k\geq1\}.
		\end{align*}
		By an induction argument with respect to $k$, we have that the generating tree $\T[B]$ is indeed given by the rules $\mathcal{R}[B]$. The associated matrix $\M[B]$ is given in Example \ref{123,132}.
	\end{example}
	
	\begin{example}
		Let $B=\{123,1432,2143\}$. Using the algorithm above with $D=200$, we obtain
		\begin{align*}
			\mathcal{R}[B]&=\{k(k-1)\cdots4132\rightsquigarrow12^{k-3},(k+1)k\cdots4132\mid k\geq3\}\\
			&\cup\{k(k-1)\cdots21\rightsquigarrow12^k,(k+1)k\cdots21\mid k\geq1\} \\
			&\cup\{12\rightsquigarrow21,132\} \\
			& \cup \{ 1\rightsquigarrow 12,21\}.
		\end{align*}
		We rewrite the rule $k(k-1)\cdots4132\rightsquigarrow12^{k-3},(k+1)k\cdots4132$ as
		\beq
		&&F_{k(k-1)\cdots4132}(x)=x^k+(k-3)x^{k-1}F_{12}(x)+F_{(k+1)k\cdots4132}(x),\,\qquad k\geq3. \label{exm5-1}
		\feq
		Thus, proceeding similarly as in the derivation of \eqref{exm2-eq1}, we obtain
		\beqn \label{exm5-eq1}
		F_{132}(x)=\frac{x^3}{1-x}+F_{12}(x) \sum_{k\geq3}(k-3)x^{k-1}.
		\feqn
		Likewise, the rule $k(k-1)\cdots21\rightsquigarrow12^k,(k+1)k\cdots21$ yields
		\beq
		&&F_{k(k-1)\cdots1}(x)=x^k+kx^{k-1}F_{12}(x)+F_{(k+1)k\cdots1}(x),\,\qquad k\geq1, \label{exm5-1}
		\feq
		and hence
		\beqn \label{exm5-eq2}
		F_{1}(x)&=\frac{x}{1-x}+F_{12}(x)\sum_{k\geq1}kx^{k-1}.
		\feqn
		By the last rule above and upon taking $k=1$ in the formula for $F_{k(k-1)\cdots1}(x)$, we have
		\beqn
		&&F_{12}(x)=x^2+xF_{21}+F_{132}(x), \label{exm5-eq3} \\
		&&F_1(x)=x+F_{12}(x)+F_{21}(x). \label{exm5-eq4}
		\feqn
		Thus, solving the system \eqref{exm5-eq1}-\eqref{exm5-eq4} for $F_1(x)$ gives
		\beq
		G_B(x) =F_1(x)=\frac{x}{1-2x-x^2}.
		\feq
	\end{example}
	
	Let us say that $a(x)$ is a rational linear combination of $b_1(x),\ldots,b_s(x)$ if there exist rational functions $c_j(x)$ such that $a(x)=c_0(x)+\sum_{j=1}^sc_j(x)b_j(x)$.  In order to systematically leverage Algorithm \ref{alg1}, we will need the following simple yet important result.
	
	\begin{theorem}\label{thg2}
		Let $1\notin B$ be any set of patterns and $m\geq1$ be a natural number. Suppose that for any node $\pi$ of $\D[B]$ with $|\pi|=:m$, the generating function $F_\pi(x)$ can be expressed as a rational linear combination of the $F_{\pi'}(x)$ with $\pi'\in \D[B]$ and $|\pi'|\leq m$ such that $c_j(0)=0$ for all $j$ in the corresponding coefficients $c_j(x)$. Then $G_B(x)$ is a rational generating function.
	\end{theorem}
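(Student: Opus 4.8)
The plan is to reduce the infinite system \eqref{F_rule} to a finite linear system over the field $\cc(x)$ of rational functions, whose unknowns are the finitely many generating functions $F_{\pi'}(x)$ with $\pi'\in\D[B]$ and $|\pi'|\le m$; call this finite set $U$. Two preliminary observations make the reduction possible. First, each class in $\L[B]$ has a canonical representative of least length, and since a child of a node is exactly one unit longer in $\T(B)$, any child label $w$ of a node $v$ satisfies $|w|\le|v|+1$; in particular the exponents $|v|+1-|w|$ appearing in \eqref{F_rule} are nonnegative. Second, as a consequence, for every $\pi'$ with $|\pi'|<m$ the rule $\pi'\rightsquigarrow w_1\cdots w_s$ has all children of length $\le m$, so \eqref{F_rule} expresses $F_{\pi'}$ as a polynomial-coefficient combination of members of $U$ alone. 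For the nodes of length exactly $m$ I would not use \eqref{F_rule}, whose children could have length $m+1$; instead I would invoke the hypothesis, which writes each such $F_\pi$ as a rational-coefficient combination of members of $U$ with all coefficients vanishing at $x=0$.

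Collecting these relations, I would write the system as $\mathbf{F}=\mathbf{b}(x)+A(x)\mathbf{F}$, where $\mathbf{F}$ is the column vector of the elements of $U$, the vector $\mathbf{b}(x)$ and the square matrix $A(x)$ have entries in $\cc(x)$, the rows indexed by length-$(<m)$ nodes coming from \eqref{F_rule} and the rows indexed by length-$m$ nodes coming from the hypothesis. Equivalently, $(I-A(x))\mathbf{F}=\mathbf{b}(x)$, and the genuine generating functions, being analytic near $0$, furnish a power-series solution of this system.

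The crux is to show $\det(I-A(x))\not\equiv0$ in $\cc(x)$, so that the system has a unique solution computable by Cramer's rule. Here I would evaluate at $x=0$ after grading $U$ by length. In a row coming from \eqref{F_rule} for $\pi'$ with $|\pi'|=\ell<m$, the coefficient of $F_w$ is $x^{\ell+1-|w|}$, which is nonzero at $x=0$ precisely when $|w|=\ell+1$ and vanishes when $|w|\le\ell$; in a row coming from the hypothesis, every coefficient $c_j(x)$ satisfies $c_j(0)=0$. Hence $A(0)$ sends the length-$\ell$ block into the length-$(\ell+1)$ block for $\ell<m$ and annihilates the length-$m$ block, so $A(0)$ is strictly block-upper-triangular with respect to the length grading, hence nilpotent. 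Consequently $\det(I-A(0))=1\ne0$, and since $\det(I-A(x))$ is analytic at $0$, it is not the zero rational function.

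I expect this determinant computation to be the main obstacle: it is exactly here that the two features of the hypothesis — the condition $c_j(0)=0$ for the length-$m$ relations, and the degree shift in \eqref{F_rule} forcing constant coefficients to arise only from genuinely longer children — conspire to make $A(0)$ nilpotent. Once invertibility of $I-A(x)$ over $\cc(x)$ is established, Cramer's rule gives $\mathbf{F}=(I-A(x))^{-1}\mathbf{b}(x)\in\cc(x)^{|U|}$; working inside the field $\cc((x))$ of formal Laurent series, the power-series solution must coincide with this unique solution, so in particular $G_B(x)=F_1(x)$ is rational, as claimed.
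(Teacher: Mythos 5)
Your proposal is correct and follows essentially the same route as the paper: both set up the finite linear system over the rational functions using \eqref{F_rule} for nodes of length less than $m$ together with the hypothesis for nodes of length exactly $m$, then observe that the coefficient matrix reduces to a unitriangular matrix at $x=0$ (your ``$A(0)$ nilpotent'' formulation is the paper's ``diagonal entries $1-xf(x)$, subdiagonal entries divisible by $x$'' in different words), so the determinant is $1$ at $x=0$ and Cramer's rule applies. The only cosmetic difference is that you argue in $\cc(x)$ and $\cc((x))$ where the paper invokes analyticity on a neighborhood of $0$.
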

	\begin{proof}
		Since $1\notin B,$ there is a rule in $\R[B]$ with parent $1$. Further, the result is apparent if $m=1$, so we may assume $m \geq 2$. Define $R'$ to be the set of all rules in $\R[B]$ whose parents are of length  at most $m-1$. For any rule $v\rightsquigarrow w_1w_2\cdots w_s$ in $R'$, we rewrite it in the form of \eqref{F_rule}:
		\beqn \label{Fv_eq}
		F_v(x)=x^{|v|}+\sum_{j=1}^s x^{|v|+1-|w_j|}F_{w_j}(x),
		\feqn
	where $|w_j|\leq m$ for any $1\leq j \leq s$. By hypothesis, for each $\pi\in \D[B]$ with $|\pi|=m$, $F_\pi(x)$ is a rational linear combination of $F_v(x)$ with $|v|\leq m$.  Therefore, combining these equations with those given in \eqref{Fv_eq} for $|v|\leq m-1$, one obtains a linear system of equations in the variables $F_v(x)$ where $|v| \leq m$. The associated coefficient matrix of this system has rational function coefficients and non-zero determinant since each main diagonal entry is of the form $1-xf(x)$ for some rational $f(x)$, with each entry below the diagonal seen to be a multiple of $x$ (possibly zero).  Indeed, the determinant is equal to $1$ at $x=0$, and hence by continuity, there exists some open interval containing zero for which the determinant is non-zero. Thus, Cramer's rule implies that each component of the solution of the system (valid for all $x$ on the interval) is a rational function. In particular, $G_B(x)=F_1(x)$ is rational, as desired.
	\end{proof}
	
	\section{Enumeration results for families of patterns}
	
	In this section, we use Algorithm \ref{alg1} along with Theorem \ref{thg2} to study several families of sets of patterns whose corresponding graphs are infinite and hence FinLabel is not applicable directly.  In each of these cases, let $G=(V,E)$ be a directed graph with set of nodes $V\subset S$ and edge set $E\subset V\times V$.
	
	\subsection{Almost path-directed graphs}
	
	We say that $G$ is an {\em almost path-directed graph} if the set of nodes $V$ can be partitioned as $V:=V'\cup W$, where $V':=\{v_0,v_1,\ldots\}$ and $|v_i|=i+m$ for all $i \geq 0$ with $m \geq 1$ fixed such that
	\begin{itemize}
		\item  The length of any node $w$ in $W=V\backslash V'$ is at most $m$.
		\item $(v_j,v_{j+1})\in E$ for all $j\geq0$ and is not repeated.
		\item All other edges $(c,d) \in E$ may be repeated (with $c=d$ possible) and are such that $c$ or $d$ belongs to $W$ (possibly both).
	\end{itemize}

	In this context, $V'$ and $W$ will be referred to as the parameters of the almost path-directed graph $G$.  Note that $(e_1,e_2)\in E$ with $e_1 \in W$ implies $e_2 \in W\cup\{v_0,v_1\}$ since $|e_1|\leq m$. In practice, we frequently have for each $m' > m$ that the node $v_{m'-m}\in V'$ is obtained from $v_0$ by replacing $m$ with either $m(m+1)\cdots m'$ or $m'(m'-1)\cdots m$. Furthermore, the result below is seen to apply more generally to any generating function which enumerates paths starting from the root and having $n-1$ steps for $n \geq 1$ in an almost path-directed graph $G$ independent of whether or not $G$ arose in the context of pattern avoidance. A similar remark applies to the graphs discussed in the subsequent two sections.
	
	When $B$ is a set of patterns whose directed graph $\D[B]$ is almost path-directed, we may apply Algorithm \ref{alg1} together with the following result to ascertain the generating function.
	
	\begin{theorem}\label{thres1}
		Let $1\notin B\subset S$ be any set of patterns. Supposed $\D[B]$ is an almost path-directed graph with parameters $V'$ and $W$. If for each $w\in W$, the generating function $\sum_{j\geq0}\M[B](v_j,w)x^j$ is rational, then $G_B(x)$ is rational.
	\end{theorem}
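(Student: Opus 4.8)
The plan is to reduce the statement to Theorem \ref{thg2} by producing, via a single telescoping of the spine, an explicit expression for $F_{v_1}(x)$. Observe first that $v_1$ is the \emph{only} node of $\D[B]$ of length $m+1$: the spine lengths are $|v_i|=i+m$, each attained once, while every $w\in W$ satisfies $|w|\le m$. Moreover $W$ is finite, being a set of isomorphism classes of permutations of length at most $m$. Hence, by Theorem \ref{thg2} applied with its length parameter equal to $m+1$, it suffices to exhibit $F_{v_1}(x)$ as a rational linear combination of the functions $F_w(x)$, $w\in W$, all of which have $|w|\le m\le m+1$, and to check that every coefficient vanishes at $x=0$.

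First I would write down, for each spine node $v_j$, the equation furnished by \eqref{F_rule}. Since $v_j\notin W$, the third defining property of an almost path-directed graph forces every non-spine out-edge of $v_j$ to terminate in $W$; and the spine edge $v_j\to v_{j+1}$ contributes the factor $x^{|v_j|+1-|v_{j+1}|}=x^{0}=1$ because $|v_{j+1}|=|v_j|+1$. Thus \eqref{F_rule} becomes
\[
F_{v_j}(x)=x^{j+m}+F_{v_{j+1}}(x)+\sum_{w\in W}\M[B](v_j,w)\,x^{\,j+m+1-|w|}F_w(x),\qquad j\ge0.
\]

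Next I would sum these identities over $j\ge1$. From \eqref{ss} one has $F_{v_j}(x)=O(x^{j+m})$, so the relevant series converge near $0$ and the spine contributions telescope, $\sum_{j\ge1}\bigl(F_{v_j}-F_{v_{j+1}}\bigr)=F_{v_1}$. Collecting terms yields
\[
F_{v_1}(x)=\frac{x^{m+1}}{1-x}+\sum_{w\in W}x^{\,m+1-|w|}\Bigl(\sum_{j\ge1}\M[B](v_j,w)\,x^{j}\Bigr)F_w(x).
\]
The coefficient of each $F_w(x)$ is rational: the hypothesis gives that $\sum_{j\ge0}\M[B](v_j,w)x^{j}$ is rational, and subtracting the constant $\M[B](v_0,w)$ keeps $\sum_{j\ge1}\M[B](v_j,w)x^{j}$ rational. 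Finally, the prefactor $x^{\,m+1-|w|}$ has exponent $m+1-|w|\ge1$ (as $|w|\le m$), so every coefficient vanishes at $x=0$. This is precisely the hypothesis of Theorem \ref{thg2} for the length $m+1$, so $G_B(x)=F_1(x)$ is rational.

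The hard part will be purely analytic rather than combinatorial: justifying the interchange of the two summations and the telescoping as genuine identities of functions on a neighbourhood of $0$, rather than merely formal power series manipulations. I would handle this using the analyticity of the $F_\pi$ near the origin noted after \eqref{ss}, the order estimate $F_{v_j}=O(x^{j+m})$ (which makes the tail $F_{v_{N}}\to0$ and the spine sum absolutely convergent), and the convergence of each $\sum_{j}\M[B](v_j,w)x^{j}$ guaranteed by the rationality hypothesis, so that all rearrangements are legitimate on a common interval about $0$.
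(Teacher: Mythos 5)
Your proof is correct and follows essentially the same route as the paper: telescope the spine relations obtained from \eqref{F_rule} to express a spine generating function as a rational linear combination of the $F_w(x)$, $w\in W$, with coefficients vanishing at $x=0$, and then invoke Theorem \ref{thg2}. The only (cosmetic) difference is that you apply Theorem \ref{thg2} at length $m+1$, where $v_1$ is the unique node, whereas the paper derives the analogous expressions for both $F_{v_0}(x)$ and $F_{v_1}(x)$ and applies the theorem at length $m$.
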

	\begin{proof}	
		We first show that $F_{v_0}(x)$ is a rational linear combination of the $F_w(x)$ with $w\in W$ such that $c_j(0)=0$ for each corresponding coefficient $c_j(x)$. To this goal, since $\D[B]$ is almost path-directed, we have
		\beq
		F_{v_j}(x)=x^{|v_0|+j}+\sum_{w\in W}\M[B](v_j,w)x^{|v_0|+j+1-|w|}F_w(x)+F_{v_{j+1}}(x), \qquad j \geq 0.
		\feq
		Hence, by summing over $j\geq0$ and using the fact that $F_{v_j}(x)\rightarrow0$ as $j\rightarrow\infty$ for $x$ sufficiently close to zero, we obtain
		\beqn \label{Fv0_e}
		F_{v_0}(x)=\frac{x^{|v_0|}}{1-x}+\sum_{w\in W}x^{|v_0|+1-w}\left(\sum_{j\geq0}\M[B](v_j,w)x^{j}\right)F_w(x).
		\feqn
	
	By assumption, each of the functions $\sum_{j\geq0}\M[B](v_j,w)x^j$ is rational, and hence by \eqref{Fv0_e}, $F_{v_0}(x)$ is a rational linear combination of the $F_w(x)$ with $w\in W$ of the desired form and $W$ a finite set.  By a similar argument (starting all sums from $j=1$), the same holds for $F_{v_1}(x)$.  Note that the set of all nodes of length at most $m$ is given by $W'=W\cup\{v_0\}$ since $\D[B]$ is almost path-directed. Then $W'$ is seen to meet the conditions of Theorem \ref{thg2} concerning nodes of length $m$, which implies the stated result.
	\end{proof}
		
	\begin{example}
		Let $B=\{123,312\}$.  Then Algorithm \ref{alg1} outputs
		\beq
		\R[B]&=&\{12\rightsquigarrow12\} \\
		&& \cup\{k(k-1)\cdots1\rightsquigarrow12^k,(k+1)k\cdots1\mid k\geq1\}.
		\feq
		See Figure \ref{figgt1} for the schematic of the corresponding directed graph which is almost path-directed. One may verify that the parameters in this case are $V'=\{v_j:=(j+2)(j+1)\cdots1\mid j\geq0 \}$ and $W=\{1,12\}$.
		We also have that
		$\sum_{j\geq0}\M[B]((j+2)(j+1)\cdots1,12)x^j$ is the rational function $\sum_{j\geq0}(j+2)x^j=\frac{2-x}{(1-x)^2}$ (with $\sum_{j\geq0}\M[B]((j+2)(j+1)\cdots1,1)x^j=0$). Therefore, Theorem \ref{thres1} implies that $G_B(x)$ is a rational generating function. Moreover, it allows us to calculate $G_B(x)$ by solving the following system:
		\beq
		&& F_1(x)=x+F_{12}(x)+F_{21}(x),\\
		&& F_{12}(x)=x^2+xF_{12}(x),\\
		&& F_{21}(x)=\frac{x^2}{1-x}+\frac{x(2-x)}{(1-x)^2}F_{12}(x).
		\feq
		Hence,
		\beq
		G_B(x)=F_1(x)=\frac{x}{1-x}+\frac{x^2}{(1-x)^3}.
		\feq
	\end{example}
	\par
	
	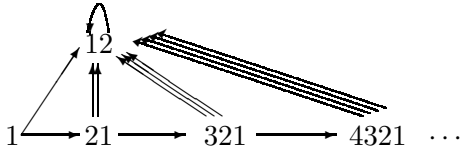
\begin{figure}[htp]
		\begin{picture}(120,55)
			\put(0,10){$1$}\put(30,10){$21$}\put(30,45){$12$}
			\put(75,10){$321$}\put(130,10){$4321$}\put(160,10){$\cdots$}
			\put(6,14){\vector(1,0){22}}
			\put(6,14){\vector(2,3){22}}
			\put(95,14){\vector(1,0){30}}
			\put(43,14){\vector(1,0){25}}
			\put(33,46){\qbezier(6,7)(3,28)(-2,7)\put(-1,7){\vector(0,-1){1}}}
			\put(33,21){\vector(0,1){20}}\put(35,21){\vector(0,1){20}}
			\put(76,21){\vector(-3,2){34}}\put(79,21){\vector(-3,2){35}}\put(82,21){\vector(-3,2){36}}
			\put(0,0){\qbezier(135,21)(93,35)(50,49)\put(50,49){\vector(1,1){2}}}
			\put(3,1){\qbezier(135,21)(93,35)(50,49)\put(50,49){\vector(1,1){2}}}
			\put(6,2){\qbezier(135,21)(93,35)(50,49)\put(50,49){\vector(1,1){2}}}
			\put(9,3){\qbezier(135,21)(93,35)(50,49)\put(50,49){\vector(1,1){2}}}
		\end{picture}
		\caption{Direct graph $\D[\{123,312\}]$}\label{figgt1}
	\end{figure}

	\begin{example}
		Let $B=\{123,2143\}$. Applying Algorithm \ref{alg1} yields $$\mathcal{R}[B]=\{k(k-1)\cdots1\rightsquigarrow1^k,(k+1)k\cdots1\mid k\geq1\}.$$
		Note that $\D[B]$ is an almost path-directed graph with parameters $V'=\{v_j=(j+2)(j+1)\cdots1\mid j\geq0\}$ and $W=\{1\}$. Thus, Theorem \ref{thres1} implies $G_B(x)$ is rational with associated linear system
		\beq
		&&F_1(x)=x+xF_1(x)+F_{21}(x),\\ &&F_{21}(x)=\frac{x^2}{1-x}+\frac{x^2(2-x)}{(1-x)^2}F_{1}(x).
		\feq
		Hence,
		\beq
		G_B(x)=F_1(x)=\frac{x-x^2}{1-3x+x^2}.
		\feq
	\end{example}
	
	\begin{example}\label{ex12331221543}
		Let $B=\{123,312,21543\}$. Then by Algorithm \ref{alg1}, we get
		\beq
		\R[B] &=& \{1\rightsquigarrow12,21 \} \\
		&& \cup \{ 12\rightsquigarrow12 \} \\
		&& \cup \{ 213\rightsquigarrow2143 \} \\
		&& \cup \{ j(j-1)\cdots1\rightsquigarrow12,213^{j-1},(j+1)j\cdots1\mid j\geq2 \}.
		\feq		
		Hence, $\D[B]$ is almost path-directed with parameters $V'=\{v_j=(j+3)(j+2)\cdots1\mid j\geq0\}$ and $W=\{1,12,21,213\}$. We then have the following system of equations:
		\beq
		&&F_1(x)=x+F_{12}(x)+F_{21}(x),\\
		&&F_{12}(x)=x^2+xF_{12}(x),\\
		&&F_{213}(x)=x^3+F_{2143}(x)=x^3+x^4,\\
		&&F_{321}(x)=\frac{x^3}{1-x}+\frac{x^2}{1-x}F_{12}(x)+F_{213}(x) \sum_{j\geq3}(j-1)x^{j-2}.
		\feq
		Hence, solving this system for $F_1(x)$ gives
		\beq
		G_B(x)=F_1(x)=\frac{x+x^3+x^4}{(1-x)^2}.
		\feq
	\end{example}
	
	In general, $\D[B]$ is almost path-directed for all pattern sets $B=\{123,312,21 k(k-1)\cdots3\}$ where $k\geq4$.
	
	\subsection{Backward path-directed graphs}
	We say $G$ is a {\em backward path-direct graph} if the set of nodes $V$ can be partitioned as $V:=V'\cup W$, where $V':=\{v_0,v_1,\ldots\}$ and $|v_i|=i+m$ for all $i \geq 0$ with $m \geq 1$ fixed such that
	\begin{itemize}		
		\item  The length of any node $w$ in $W= V\backslash V'$ is at most $m$.
		\item For each $j\geq0$, $(v_j,v_i)\in E$ for all $i=0,1,\ldots,j-1,j+1$, with all of these edges occurring once. Additionally, for some fixed integer $a\geq 0$, there are $a$ loops at the node $v_i$ for all $i \geq 0$.
		\item All other edges $(c,d) \in E$ may be repeated (with $c=d$ possible) and are such that $c$ or $d$ belongs to $W$ (possibly both).
	\end{itemize}
	
In this context, $V'$, $W$ and $a$ are called the parameters of the backward path-directed graph $G$.
In practice, we frequently have for each $m'>m$ that the node $v_{m'-m}\in V'$ is obtained from $v_0$ by replacing $m$ with either $m(m+1)\cdots m'$ or $m'(m'-1)\cdots m$.  A basic example of a backward path-directed graph is $D[\{123\}]$, where the tree $\T[\{123\}]$ is defined by the root $1$ and the set of succession rules $k(k-1)\cdots1\rightsquigarrow 1,21,\ldots,(k+1)k\cdots1$.

	\begin{example} \label{impex_11}
		Let $B=\{1243, 1324, 1342, 1423, 1432, 2143, 2413, 2431, 3142, 4132\}.$ Then the Algorithm \ref{alg1} describes the tree $\T[B]$ by the following succession rules:
		\beqn \label{exmp-rb-10}
		&& \R[B]=\{1\rightsquigarrow12,21\}\\
		&& \qquad \quad \cup \{12\rightsquigarrow21,123,132\} \notag\\
		&& \qquad \quad \cup \{ 21\rightsquigarrow21,213,321\}\notag\\
		&& \qquad \quad \cup \{ 123\rightsquigarrow21,123\}\notag\\
		&& \qquad \quad \cup \{ 213\rightsquigarrow21,123\}\notag\\
		&& \qquad \quad \cup \{ j(j-1)\cdots1\rightsquigarrow21,213,321,4321,\ldots,(j+1)j\cdots1\mid j\geq3\}.\notag
		\feqn
		One may verify that $\D[B]$ is backward path-directed with parameters $V'=\{v_j=(j+3)(j+2)\cdots1\mid j\geq 0\}$, $W=\{1,12,21,123,132,213\}$ and $a=1$.
	\end{example}
	
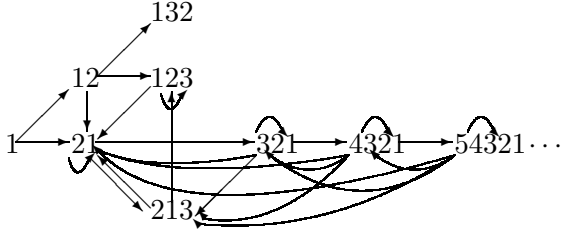
\begin{figure}[htp]
		\begin{picture}(180,100)
			\put(0,30){			\put(0,0){$1$}\put(25,0){$21$}\put(25,25){$12$}\put(95,0){$321$}\put(130,0){$4321$}
				\put(170,0){$54321$}\put(4,4){\vector(1,0){20}}
				\put(34,4){\vector(1,0){60}}\put(109,4){\vector(1,0){20}}
				\put(149,4){\vector(1,0){20}}\put(4,4){\vector(1,1){20}}
	\put(55,25){$123$}\put(55,50){$132$}\put(35,29){\vector(1,0){20}}
	\put(35,29){\vector(1,1){20}}\put(31,23){\vector(0,-1){15}}	
				\qbezier(24,-2)(27,-13)(32,-2)\put(32,-2){\vector(1,1){2}}
				\put(55,-25){$213$}\put(32,-2){\vector(1,-1){20}}
\put(35,24){\qbezier(24,-2)(27,-13)(32,-2)\put(32,-2){\vector(1,1){2}}}
\put(55,25){\vector(-1,-1){20}}\put(63,-19){\vector(0,1){42}}
				\put(55,-21){\vector(-1,1){20}} \qbezier(95,-1)(55,-8)(35,1)\put(35,1){\vector(-1,1){2}}\put(95,-1){\vector(-1,-1){23}}
			\qbezier(95,8)(100,19)(105,8)\put(105,8){\vector(1,-1){2}}
	        \qbezier(130,-1)(65,-12)(35,1)\qbezier(170,-1)(65,-32)(35,1)		\put(40,0){\qbezier(95,8)(100,19)(105,8)\put(105,8){\vector(1,-1){2}}}
\put(80,0){\qbezier(95,8)(100,19)(105,8)\put(105,8){\vector(1,-1){2}}}
				\put(198,0){$\cdots$}				\put(40,0){\qbezier(95,8)(100,19)(105,8)\put(105,8){\vector(1,-1){2}}}
\qbezier(130,-1)(115,-12)(100,-1)\put(100,-1){\vector(-1,1){2}} \put(40,0){\qbezier(130,-1)(115,-12)(100,-1)\put(100,-1){\vector(-1,1){2}}} \put(40,0){\qbezier(130,-1)(95,-28)(60,-1)\put(100,-1){\vector(-1,1){2}}}
\qbezier(130,-1)(103,-30)(75,-25)\put(75,-24){\vector(-1,1){2}}
\qbezier(170,-1)(123,-32)(73,-27)\put(73,-27){\vector(-1,1){2}}
}
		\end{picture}
		
		\caption{$D[\{1243, 1324, 1342, 1423, 1432, 2143, 2413, 2431, 3142, 4132\}]$}\label{exa1010}
		
	\end{figure}

	To determine $G_B(x)$ when $\D[B]$ is backward path-directed, we may apply Algorithm \ref{alg1} to determine $\T[B]$ and then employ the following result.
	
	\begin{theorem}\label{thres2}
		Let $1\notin B\subset S$ be any set of patterns. Suppose $\D[B]$ is a backward path-directed graph with parameters $V'$, $W$  and $a\geq0$. If $\sum_{j\geq0}\M[B](v_j,w)x^j$ is a rational generating function for any $w\in W$, then $G_B(x)$ is a rational generating function of $x$ and
		\beq
		t_0:=\frac{1+x-ax-\sqrt{(1+x-ax)(1-3x-ax)}}{2x(1+x-ax)}.
		\feq
	\end{theorem}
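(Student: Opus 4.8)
The plan is to mirror the proof of Theorem \ref{thres1}, but to replace the plain summation over $j$ by a summation weighted by powers of $t_0$, the weight being chosen precisely to collapse the extra ``backward'' contributions coming from the edges $(v_j,v_i)$ with $i<j$. Writing $m:=|v_0|$ and using \eqref{F_rule} together with the three defining properties of a backward path-directed graph, I would first record, for every $j\ge0$,
\[
F_{v_j}(x)=x^{j+m}+ax\,F_{v_j}(x)+\sum_{i=0}^{j-1}x^{j-i+1}F_{v_i}(x)+F_{v_{j+1}}(x)+\sum_{w\in W}\M[B](v_j,w)\,x^{j+m+1-|w|}F_w(x),
\]
where the term $ax\,F_{v_j}$ accounts for the $a$ loops at $v_j$ and $F_{v_{j+1}}$ for the single forward edge. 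Introducing $H(u):=\sum_{j\ge0}F_{v_j}(x)u^j$, multiplying the $j$-th equation by $u^j$ and summing, each of the five groups collapses: the constant term gives $x^m/(1-xu)$, the loop term $ax\,H(u)$, the backward convolution $\tfrac{x^2u}{1-xu}H(u)$, the forward term $\tfrac1u\bigl(H(u)-F_{v_0}(x)\bigr)$, and the last group $\sum_{w\in W}x^{m+1-|w|}F_w(x)\,g_w(xu)$, where $g_w(x):=\sum_{j\ge0}\M[B](v_j,w)x^j$ is rational by hypothesis.

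Collecting the coefficient of $H(u)$ produces the kernel $K(u)=1-ax-\tfrac{x^2u}{1-xu}-\tfrac1u$, and clearing denominators shows that $K(u)=0$ is equivalent to the quadratic $x(1+x-ax)u^2-(1+x-ax)u+1=0$, whose roots multiply to $\tfrac1{x(1+x-ax)}$ and sum to $\tfrac1x$. The root that is a power series in $x$ with value $1$ at $x=0$, hence the one lying safely inside the radius of convergence of $H(u)$ in $u$, is exactly the stated $t_0$; the other root grows like $1/x$ and is inadmissible. The crux is then the substitution $u=t_0$: since $F_{v_j}(x)=O(x^{j+m})$ in the $x$-adic valuation while $t_0\in\mathbb{Q}[[x]]$ has valuation $0$, the weighted sum $\sum_{j\ge0}F_{v_j}(x)t_0^{\,j}$ converges $x$-adically (equivalently, $t_0(x)$ stays within the disk of convergence for small real $x$), so the substitution is legitimate. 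Because $K(t_0)=0$, the coefficient of $H(t_0)$ vanishes and the equation reduces to the single scalar relation
\[
\frac{F_{v_0}(x)}{t_0}=\frac{x^m}{1-xt_0}+\sum_{w\in W}x^{m+1-|w|}\,g_w(xt_0)\,F_w(x),
\]
exhibiting $F_{v_0}(x)$ as a $\mathbb{Q}(x,t_0)$-linear combination of the finitely many $F_w(x)$, $w\in W$.

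It remains to close the system over the finite set of boundary unknowns $\{F_{v_0}\}\cup\{F_w:w\in W\}$. The $j=0$ instance of the recursion gives $F_{v_1}$ as a $\mathbb{Q}(x)$-linear combination of $F_{v_0}$ and the $F_w$; substituting this into the succession-rule equations \eqref{F_rule} for the parents $w\in W$ (whose children lie in $W\cup\{v_0,v_1\}$, as $v_1$ is the unique node of length $m+1$) yields $|W|$ further equations with coefficients in $\mathbb{Q}(x)$. Together with the boxed kernel relation, this is a square linear system over $\mathbb{Q}(x,t_0)$. As in the proof of Theorem \ref{thg2}, ordering the unknowns by length makes the coefficient matrix equal to the identity at $x=0$ (each diagonal entry has the form $1-x(\cdots)$ and every off-diagonal entry carries a positive power of $x$), so its determinant equals $1$ at $x=0$ and is therefore a nonzero element of $\mathbb{Q}(x,t_0)$ on a neighbourhood of the origin. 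Cramer's rule then expresses every $F_w(x)$ and $F_{v_0}(x)$, and hence $G_B(x)=F_1(x)$ (noting that $1\in\{v_0\}\cup W$), as a rational function of $x$ and $t_0$.

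The step I expect to be the main obstacle is identifying the correct kernel root and rigorously justifying the substitution $u=t_0$, namely that the weighted sum $\sum_{j\ge0}F_{v_j}t_0^{\,j}$ genuinely converges so that $K(t_0)=0$ may be invoked; here, in contrast to the almost path-directed case, the natural weight $t_0$ does not vanish at the origin, which is what forces the algebraic (rather than purely rational in $x$) dependence. The determinant computation closing the finite system is then routine and entirely parallel to Theorem \ref{thg2}.
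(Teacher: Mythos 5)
Your proposal is correct and follows essentially the same route as the paper: the same recursion for $F_{v_j}$, the same generating function in an auxiliary variable, the same kernel $x(1+x-ax)u^2-(1+x-ax)u+1=0$ with root $t_0$, and the same closure via the finite linear system of Theorem \ref{thg2}. The only (harmless) variations are that you justify the substitution $u=t_0$ by $x$-adic convergence, which the paper leaves implicit, and you extract $F_{v_1}$ from the $j=0$ recursion rather than from a second kernel computation with $B(t)=\sum_{j\geq1}F_{v_j}(x)t^{j-1}$ as the paper does.
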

	\begin{proof}
		We first determine an expression for the generating function $F_{v_0}(x)$. Since $\D[B]$ is backward path-directed, formula \eqref{F_rule} yields
		\beqn \label{l1111}
		&& F_{v_j}(x)=x^{|v_0|+j}+axF_{v_j}(x)+F_{v_{j+1}}(x)+x^{j+1}F_{v_0}(x)+\cdots+x^2F_{v_{j-1}}(x) \notag \\
		&& \qquad \qquad + \sum_{w\in W}\M[B](v_j,w)x^{|v_0|+j+1-|w|}F_w(x), \qquad j \geq 0.
		\feqn
		
		Define $A(t):=\sum_{j\geq0}F_{v_j}(x)t^j$. Multiplying both sides of \eqref{l1111} by $t^j$, and summing over all $j\geq0$, we obtain
		\beq
		A(t)&=&\frac{x^{|v_0|}}{1-xt}+\sum_{w\in W}\left(\sum_{j\geq0}\M[B](v_j,w)x^{|v_0|+1-|w|}(xt)^j\right)F_w(x)\\
		&& +\frac{x^2t}{1-xt}A(t)+axA(t)+\frac{A(t)-A(0)}{t},
		\feq
		which is equivalent to
		\beq
		\left(1-\frac{x^2t}{1-xt}-ax-\frac{1}{t}\right)A(t)
		&=&\frac{x^{|v_0|}}{1-xt}+\sum_{w\in W}\left(\sum_{j\geq0}\M[B](v_j,w)x^{|v_0|+1-|w|}(xt)^j\right)F_w(x)\\
		&& -\frac{1}{t}A(0).
		\feq
		We apply the \emph{kernel method} (see, e.g., \cite{HouM}) to the last equation and take $t=t_0$, where $t_0$ satisfies $1-\frac{x^2t_0}{1-xt_0}-ax-\frac{1}{t_0}=0$, to obtain
		\beqn \label{l1112}
		F_{v_0}(x)=A(0)=\frac{x^{|v_0|}t_0}{1-xt_0}+t_0\sum_{w\in W}\left(\sum_{j\geq0}\M[B](v_j,w)x^{|v_0|+1-|w|}(xt_0)^j\right)F_w(x),
		\feqn
		with
		$$t_0=\frac{1+x-ax-\sqrt{(1+x-ax)(1-3x-ax)}}{2x(1+x-ax)}=1+ax+(a^2+1)x^2+\cdots.$$
	
By \eqref{l1112} and the assumed rationality of $\sum_{j\geq0}\M[B](v_j,w)x^j$, we have that $F_{v_0}(x)$ is a linear combination of $F_v(x)$ for $w \in W$ whose coefficients are rational in $x$ and $t_0$ with $W$ a finite set.  Upon considering $B(t)=\sum_{j\geq 1}F_{v_j}(x)t^{j-1}$ and finding $B(0)=F_{v_1}(x)$, one can show that $F_{v_1}(x)$ is a similar linear combination of $F_{v_0}(x)$ and the $F_w(x)$ with $w \in W$.  Upon substituting out this expression for $F_{v_1}(x)$ as needed, it is seen that the nodes of length $m$ meet the conditions of Theorem \ref{thg2}, but where the coefficients $c_j(x)$ are now rational in $x$ and $t_0$.  Thus, it follows that the generating function $F_1(x)=G_B(x)$ is rational in $x$ and $t_0$.
	\end{proof}
	
	In the following example, we elaborate on how to apply Theorem \ref{thres2} to Example \ref{impex_11}.
	
	\begin{example}\label{exT10aa2}
		It is easy to check  that Theorem \ref{thres2} applies to the case
		\beq
		B=\{1243, 1324, 1342, 1423, 1432, 2143, 2413, 2431, 3142, 4132\},
		\feq
		and hence $G_B(x)$ is rational in $x$ and $t_0=C(x)$, where $C(x)=\frac{1-\sqrt{1-4x}}{2x}=\sum_{n\geq 0}C_nx^n$ and $C_n=\frac{1}{n+1}\binom{2n}{n}$ is the $n$-th Catalan number.  The set of succession rules given by \eqref{exmp-rb-10} can be written as the system of equations
		\beq
		&&F_1(x)=x+F_{12}(x)+F_{21}(x),\\
		&&F_{12}(x)=x^2+xF_{21}(x)+F_{123}(x)+F_{132}(x),\\
		&&F_{21}(x)=x^2+xF_{21}(x)+F_{213}(x)+F_{321}(x),\\
		&&F_{132}(x)=x^3,\\
		&&F_{123}(x)=x^3+x^2F_{21}(x)+xF_{123}(x),\\
		&&F_{213}(x)=x^3+x^2F_{21}(x)+xF_{123}(x),\\
		&&F_{321}(x)=A(0).
		\feq
		In addition, from $\M[B](v_j,21)=\M[B](v_j,213)=1$ for all $j \geq0$, have
		$$A(0)=\frac{x^3t_0}{1-xt_0}+\frac{x^2t_0}{1-xt_0}F_{21}(x)+
		\frac{xt_0}{1-xt_0}F_{213}(x).$$
		Hence, by solving the above system, we obtain $G_B(x)=F_1(x)=x^3-1+C(x)$, as was shown in \cite[Lemma 4.13]{MSS0}.
		
		Similarly, one may verify that Theorem \ref{thres2} is applicable to the case
		\beq
		B=\{1234, 1243, 1324, 1342, 1423, 2134, 2314, 2341, 3124, 4123\}.
		\feq
		In particular, it yields $G_B(x)=x^3-1+C(x)$, as was also shown in \cite[Lemma 4.13]{MSS0}.
	\end{example}

\noindent \emph{Remark:} Note that we may allow for nodes in $W$ in the definition above to have length greater than $m$, provided it is required that $W$ be finite where there are no edges $(a,b)$ such that $a \in W$ with $|a|>m$ and $b=v_i$ for some $i \geq 2$.  The same can be said in the case when $\D[B]$ is almost path-directed.
	
	\subsection{Directed graph with $\alpha$-growing paths}
	For $\alpha\geq 1$, suppose that the set $V$ of nodes in $\D[B]$ for some $B$ can be partitioned into sets $V^{(j)}$ for $1 \leq j \leq \alpha$ and $W$, where $V^{(j)}:=\{v_i^{(j)}\  |\  i\geq 0\}$ and $W$ is a finite set with $W:=\{w_1,\ldots,w_\ell\}$ for some $\ell \geq 1$.  Then we will say that $V$ has \emph{$\alpha$-growing paths} if the following conditions are satisfied:
	\begin{itemize}
		\item $|v_i^{(1)}|=\cdots = |v_i^{(\alpha)}|=i+m$ for all $i \geq 0$ where $m \geq 1$ is fixed.
		\item The length of any node in $W$ is at most $m$.
		\item The edges that start with a member of $\cup_{j=1}^\alpha V^{(j)}$ are dictated by the following succession rules:
		\begin{align*}
			v_k^{(1)}\rightsquigarrow& v_0^{(1)},v_1^{(1)},\ldots,v_{k-1}^{(1)},(v_k^{(1)})^{r_1},(w_1)^{p_{1,1}^{(k)}},\ldots,(w_\ell)^{p_{1,\ell}^{(k)}},\\
			v_k^{(s)}\rightsquigarrow &v_0^{(s')},v_1^{(s')},\ldots,v_{k-1}^{(s')},(v_{k}^{(s')})^{r_s},(v_{k+1}^{(s')})^{r_{s,1}},(v_{k}^{(s)})^{r_{s,2}},\\
&(w_1)^{p_{s,1}^{(k)}},\ldots,(w_\ell)^{p_{s,\ell}^{(k)}},\quad 1\leq s'<s\leq \alpha-1,\\
			v_k^{(\alpha)}\rightsquigarrow &v_0^{(\alpha')},v_1^{(\alpha')},\ldots,v_{k-1}^{(\alpha')},(v_{k}^{(\alpha')})^{r_\alpha},(v_k^{(\alpha)})^{r_\alpha'},(v_{k+1}^{(1)})^{r_{\alpha,1}},\ldots,(v_{k+1}^{(\alpha)})^{r_{\alpha,\alpha}},\\
&(w_1)^{p_{\alpha,1}^{(k)}},\ldots,(w_\ell)^{p_{\alpha,\ell}^{(k)}},\quad
			1\leq \alpha'<\alpha.
		\end{align*}
\item All other edges start with a node in $W$.
	\end{itemize}
Here, the non-negative exponents $r_\alpha'$, $r_i,r_{\alpha,i}$ for $i \in [\alpha]$, $r_{i,1},r_{i,2}$ for $i \in [2,\alpha-1]$ and $p_{i,j}^{(k)}$ for $k \geq 0$, $1 \leq i \leq \alpha$ and $1 \leq j \leq \ell$ are all assumed to be fixed.  Further, the parameters $\alpha'$ and $s'$ in the penultimate condition above are also fixed with $s'$ depending upon $s>1$. For each $1\leq j \leq \alpha$, it is often the case that the node $v_i^{(j)}\in V^{(j)}$ for $i>0$ contains either the subword $m(m+1)\cdots(m+i)$ or $(m+i)(m+i-1)\cdots m$.
	
	We now consider two examples of sets of patterns whose corresponding directed graphs have $\alpha$-growing paths.
	
\begin{figure}[htp]
		\begin{tikzpicture}[->]
			\node (1) at (10,15) {1};
			\node (21) at (8,13.5) {21};
			\node (12) at (11,13.5) {12};
			\node (c3) at (4,11.5) {$c_3$};
			\node (b3) at (6,11.5) {$b_3$};
			\node (a3) at (8,11.5) {$a_3$};
			\node (132) at (11,10.5) {132};
			\node (312) at (12,11.5) {312};
			\node (c4) at (3.5,9) {$c_4$};
			\node (b4) at (5.5,10) {$b_4$};
			\node (a4) at (7.5,10) {$a_4$};
			\node (c5) at (3.5,7) {$c_5$};
			\node (b5) at (4.7,8.3) {$b_5$};
			\node (a5) at (6.7,8.3) {$a_5$};
			\node (c6) at (3.5,5) {$\vdots$};
			\node (b6) at (4.2,6.3) {$\vdots$};
			\node (a6) at (6.2,6.3) {$\vdots$};
			
			\path (1) edge  node[above] {} (12);
			\path (1) edge  node[above,right] {} (21);
			\path (12) edge  node[above,left] {} (132);
			\path (12) edge  node[above, right] {} (312);
			\path (12) edge [loop above] node {} (12);
			\path (21) edge  node[above, right] {} (c3);
			\path (21) edge  node[above, right] {} (b3);
			\path (21) edge  node[above, right] {} (a3);
			\path (132) edge [loop above] node {(2)} (132);
			\path (a3) edge [loop above] node {} (a3);
			\path (b3) edge [loop above] node {} (b3);
			\path (b3) edge  node[above, right] {} (a3);
			\path (b3) edge  node[above, right] {} (132);
			\path (c3) edge  node[above, right] {} (a4);
			\path (c3) edge  node[above, right] {} (b4);
			\path (c3) edge  node[above, right] {} (c4);
			\path (a4) edge [loop above] node {} (a4);
			\path (b4) edge [loop above] node {} (b4);
			\path (a4) edge  node[above, right] {} (a3);
			\path (b4) edge  node[above, right] {} (a4);
			\path (b4) edge  node[above, right] {} (a3);
			\path (b4) edge  node[above, right] {} (132);
			\path (c4) edge  node[above, right] {} (a4);
			\path (c4) edge  node[above, right] {} (a5);
			\path (c4) edge  node[above, right] {} (b5);
			\path (c4) edge  node[above, right] {} (c5);
			\path (a5) edge [loop above] node {} (a5);
			\path (b5) edge [loop above] node {} (b5);
			\path (c5) edge  node[above, right] {} (a4);
			\path (c5) edge  node[above, right] {} (a5);
			\path (b5) edge  node[above, right] {} (a4);
			\path (b5) edge  node[above, right] {} (a5);
			\path (b5) edge  node[above, right] {} (a3);
			\path (b5) edge  node[above, right] {} (132);
			\path (a5) edge[out=0,in=-90]  node[above, right] {} (a3);
			\path (a5) edge  node[above, right] {} (a4);
			\path (c5) edge  node[above, right] {} (c6);
			\path (c5) edge  node[above, right] {} (b6);
			\path (c5) edge  node[above, right] {} (a6);
\path (c3) edge[out=0,in=-60]  node[above, right] {} (a3);
\path (c4) edge[out=0,in=90]  node[above, right] {} (a3);
\path (c5) edge[out=0,in=-90]  node[above, right] {} (a3);
		\end{tikzpicture}		
		\caption{The label on the loop indicates that is repeated.}\label{figEX13a1}
	\end{figure}
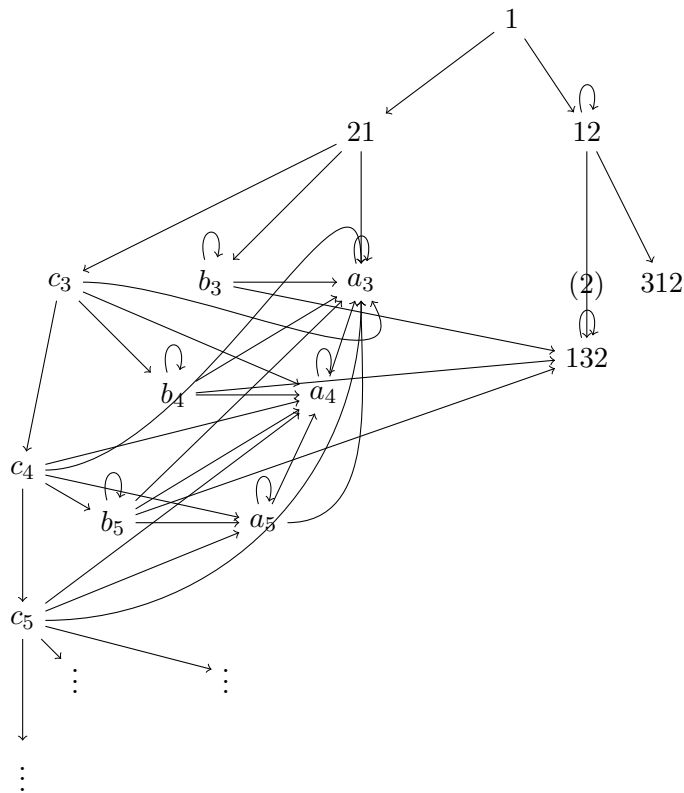			

	\begin{example}\label{exT1T9}
		Let $B=\{1324,1423,2143,2413,3124,3142,3412,4132,4213,4231,4312\}.$ Then, by Algorithm \ref{alg1}, $\R[B]$ is given by
		\beq
		\R[B]&=&\{1\rightsquigarrow12,21\}\\
		&&\cup\{12\rightsquigarrow12,132,312\}\\
		&&\cup \{21\rightsquigarrow213,231,321\}\\
		&&\cup \{132\rightsquigarrow132^2\}\\
		&&\cup \{a_k\rightsquigarrow a_4,\ldots,a_k,213, \quad k\geq3\}\\
		&&\cup \{b_k\rightsquigarrow a_4,\ldots,a_k,b_k,132,213, \quad k\geq3\}\\
		&&\cup \{c_k\rightsquigarrow a_4,\ldots,a_k,a_{k+1},b_{k+1},c_{k+1},213, \quad k\geq3\},
		\feq
		where
		\beq
		a_k&=&(k-1)(k-2)k(k-3)(k-4)\cdots1,\\
		b_k&=&(k-1)k(k-2)(k-3)\cdots1,\\
		c_k&=&k(k-1)\cdots1.
		\feq
		Here, $\D[B]$ (see Figure \ref{figEX13a1}) has $3$-growing paths.
	\end{example}
	
	\begin{example}
		Let $B=\{1243,1324,1342,1423,1432,2143,2413,2431,3142,3412,4132\}$. $\R[B]$ is given by
		\beq
		\R[B]&=&\{1\rightsquigarrow12,21\}\\
		&&\cup \{12\rightsquigarrow123,132,312\}\\
		&&\cup \{21\rightsquigarrow213,231,321\}\\
		&&\cup \{123\rightsquigarrow123^2\}\\
		&&\cup \{213\rightsquigarrow123,213 \}\\
		&&\cup \{312\rightsquigarrow123,312\}\\
		&&\cup \{a_k\rightsquigarrow a_3,\ldots,a_{k-1},a_k^2,213, \quad k\geq3\}\\
		&&\cup \{b_k\rightsquigarrow a_3,\ldots,a_k,a_{k+1},b_{k+1},213, \quad k\geq3\},
		\feq
		where
		\beq
		a_k&=&(k-1)k(k-2)(k-3)\cdots1,\\
		b_k&=&k(k-1)\cdots1.
		\feq
		Here, $\D[B]$ (see Figure \ref{figEX13a2}) has $2$-growing paths.
		\begin{figure}[htp]
			\begin{tikzpicture}[->]
				\node (1) at (10,15) {1};
				\node (21) at (8,13.5) {21};
				\node (12) at (11,13.5) {12};
				\node (312) at (10,12) {312};
				\node (123) at (11,11.5) {123};
				\node (132) at (12,12) {132};
				\node (b3) at (5.5,12) {$b_3$};
				\node (a3) at (6.5,12) {$a_3$};
				\node (213) at (8,12) {213};
				\node (b4) at (5,10) {$b_4$};
				\node (a4) at (7,10) {$a_4$};
				\node (b5) at (5,8.5) {$b_5$};
				\node (a5) at (7,8.5) {$a_5$};
				\node (b6) at (5,7) {$\vdots$};
				\node (a6) at (7,7) {$\vdots$};
				
				\path (123) edge [loop above] node {2} (123);
				\path (312) edge [loop above] node {} (312);
				\path (213) edge [loop above] node {} (213);
				\path (a3) edge [loop above] node {2} (a3);
				\path (a4) edge [loop above] node {2} (a4);
				\path (a5) edge [loop above] node {2} (a5);
				\path (1) edge  node[above, right] {} (12);
				\path (1) edge  node[above, right] {} (21);
				\path (12) edge  node[above, right] {} (123);
				\path (12) edge  node[above, right] {} (132);
				\path (12) edge  node[above, right] {} (312);
				\path (21) edge  node[above, right] {} (a3);
				\path (21) edge  node[above, right] {} (b3);
				\path (21) edge  node[above, right] {} (213);
				\path (213) edge[out=-25,in=-45]  node[above, right] {} (123);
				\path (a3) edge  node[above, right] {} (213);
				\path (b3) edge[out=-45,in=-45]  node[above, right] {} (213);
				\path (b3) edge  node[above, right] {} (a3);
				\path (b3) edge  node[above, right] {} (a4);
				\path (b3) edge  node[above, right] {} (b4);
				\path (b4) edge  node[above, right] {} (b5);
				\path (b4) edge  node[above, right] {} (a5);
				\path (b4) edge  node[above, right] {} (a4);
				\path (b4) edge  node[above, right] {} (213);
				\path (b4) edge  node[above, right] {} (a3);
				\path (a4) edge  node[above, right] {} (213);
				\path (a4) edge  node[above, right] {} (a3);
				\path (a5) edge[out=-15,in=-35]  node[above, right] {} (213);
				\path (a5) edge[out=20,in=-45]  node[above, right] {} (a3);
				\path (a5) edge  node[above, right] {} (a4);
				\path (b5) edge  node[above, right] {} (213);
				\path (b5) edge  node[above, right] {} (a3);
				\path (b5) edge  node[above, right] {} (a4);
				\path (b5) edge  node[above, right] {} (a5);
				\path (b5) edge  node[above, right] {} (a6);
				\path (b5) edge  node[above, right] {} (b6);
				\path (312) edge  node[above, right] {} (123);
			\end{tikzpicture}		
			\caption{The labels on four of the loops indicate their repetition as edges in the graph.}\label{figEX13a2}
		\end{figure}
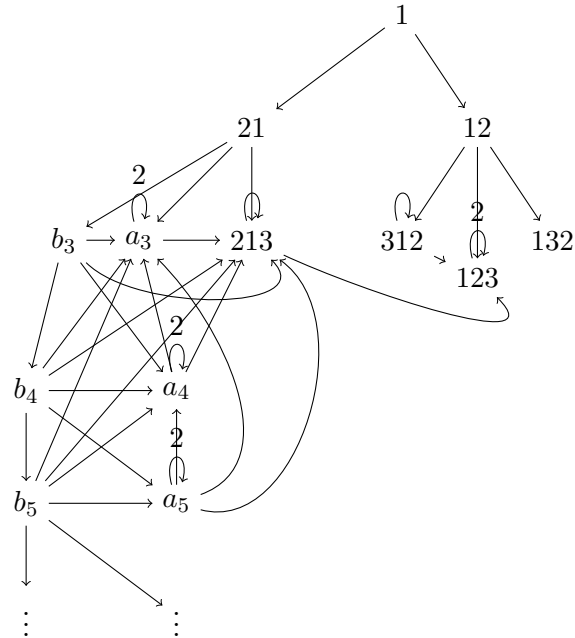		
	\end{example}
	
	If $B$ is a set of patterns whose directed graph $\D[B]$ has $\alpha$-growing paths, we may apply Algorithm \ref{alg1} and the following result to deduce the rationality of $G_B(x)$.
	
	\begin{theorem}\label{thresc}
		Let $1\notin B \subset S$ be any set of patterns whose corresponding $\D[B]$ has $\alpha$-growing paths with $V^{(1)},\ldots,V^{(\alpha)},W$ as described above. If the generating function given by $\sum_{j\geq0}\M[B](v_j^{(i)},w)x^j$ is rational for each $i \in [\alpha]$ and $w\in W$, then $G_B(x)$ is rational.
	\end{theorem}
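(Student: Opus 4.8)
The plan is to mirror the strategy of the proofs of Theorems \ref{thres1} and \ref{thres2}, but now tracking all $\alpha$ path-families simultaneously. For each $i\in[\alpha]$ I would introduce the auxiliary generating function $A^{(i)}(t):=\sum_{k\ge0}F_{v_k^{(i)}}(x)\,t^k$ and translate each of the three types of succession rules into a functional equation in $t$ via \eqref{F_rule}. Writing $P_{i,w}(y):=\sum_{k\ge0}\M[B](v_k^{(i)},w)\,y^k$, which is rational by hypothesis, the bookkeeping is uniform: backward edges $v_k\rightsquigarrow v_0,\dots,v_{k-1}$ convolve into a factor $\frac{x^2t}{1-xt}$, each loop contributes a summand $xA^{(i)}(t)$, a forward edge $v_k\rightsquigarrow v_{k+1}$ contributes a shift $\frac1t\big(A(t)-A(0)\big)$, and each family of $W$-edges contributes $x^{m+1-|w|}P_{i,w}(xt)F_w(x)$. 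The ultimate goal is to express every $F_{v_0^{(i)}}(x)$ (and, where needed, $F_{v_1^{(i)}}(x)$) as a rational linear combination of the finitely many $F_w(x)$, $w\in W$, and then to invoke Theorem \ref{thg2} on the nodes of length $m$.

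The key observation is that the dependency among the families is \emph{triangular} except at the top. Family $V^{(1)}$ refers only to itself (through $r_1$ loops) and to $W$, so its equation reads
\[
\Big(1-\frac{x^2t}{1-xt}-r_1x\Big)A^{(1)}(t)=\frac{x^m}{1-xt}+\sum_{w\in W}x^{m+1-|w|}P_{1,w}(xt)F_w(x),
\]
whose coefficient of $A^{(1)}(t)$ is invertible at $x=0$; hence $A^{(1)}(t)$ is rational in $t,x$ with coefficients that are rational-in-$x$ linear combinations of the $F_w(x)$. Each intermediate family $V^{(s)}$ with $2\le s\le\alpha-1$ refers backward only to the strictly earlier family $V^{(s')}$, together with a forward shift \emph{into} $V^{(s')}$ and a loop into itself (multiplicity $r_{s,2}$); crucially its self-coupling is the invertible factor $1-r_{s,2}x$ with \emph{no} $1/t$ term. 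Processing $s=2,3,\dots,\alpha-1$ in order and substituting the already-known $A^{(s')}(t)$, I would solve for each $A^{(s)}(t)$ directly, again as a rational function of $t,x$ linear in the $F_w(x)$.

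The only genuinely self-referential case is the top family $V^{(\alpha)}$, which forward-references all families $v_{k+1}^{(i)}$, including itself, and this is where the main work lies. After substituting the known $A^{(i)}(t)$ for $i<\alpha$, the equation collapses to
\[
\Big(1-r_\alpha'x-\frac{r_{\alpha,\alpha}}{t}\Big)A^{(\alpha)}(t)=\Phi(t,x)-\frac{r_{\alpha,\alpha}}{t}A^{(\alpha)}(0),
\]
where $\Phi(t,x)$ is a known function, rational in $t,x$ and linear in the $F_w(x)$. Clearing $t$, the kernel $t(1-r_\alpha'x)-r_{\alpha,\alpha}$ is \emph{linear} in $t$ (in contrast to the quadratic kernel of Theorem \ref{thres2}, because the backward edges of $V^{(\alpha)}$ land in $V^{(\alpha')}$ rather than in $V^{(\alpha)}$ itself), with the single root $t_0=\frac{r_{\alpha,\alpha}}{1-r_\alpha'x}$; this is precisely why $G_B$ comes out rational rather than merely algebraic. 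Applying the kernel method, i.e.\ substituting $t=t_0$ to annihilate the left-hand side (the degenerate case $r_{\alpha,\alpha}=0$ needs no kernel method and is solved directly), yields
\[
F_{v_0^{(\alpha)}}(x)=A^{(\alpha)}(0)=\frac{t_0}{r_{\alpha,\alpha}}\,\Phi(t_0,x),\qquad t_0=\frac{r_{\alpha,\alpha}}{1-r_\alpha'x},
\]
again a rational linear combination of the $F_w(x)$.

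I expect the main obstacle to be justifying the substitution $t=t_0$: unlike in Theorem \ref{thres2}, here $t_0(0)=r_{\alpha,\alpha}$ need not be small, so I must verify that $t_0$ lies in the region where $A^{(\alpha)}(t)=\sum_k F_{v_k^{(\alpha)}}(x)t^k$ converges. For this I would invoke the Stanley--Wilf bound $|S_n(B)|\le C^n$, which gives $F_{v_k^{(\alpha)}}(x)=O\big((Cx)^{k+m-1}\big)$ and hence a radius of convergence in $t$ of order $1/(Cx)\to\infty$ as $x\to0$; thus $t_0(x)\to r_{\alpha,\alpha}$ sits well inside the disk of convergence for all $x$ in a sufficiently small neighborhood of zero, and the resulting identity extends by analytic continuation. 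Finally, having each $A^{(i)}(t)$ in closed form, I would read off $F_{v_0^{(i)}}(x)$ and $F_{v_1^{(i)}}(x)$ as rational combinations of the $F_w(x)$ whose coefficients vanish at $x=0$ (each $F_w$ enters an edge with weight $x^{m+1-|w|}$ and $|w|\le m$), substitute out the length-$(m+1)$ nodes $v_1^{(i)}$ wherever they occur as children of the length-$m$ members of the finite set $W$, and thereby check that the length-$m$ nodes satisfy the hypotheses of Theorem \ref{thg2}. That theorem then delivers the rationality of $G_B(x)=F_1(x)$.
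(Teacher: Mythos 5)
Your proposal follows essentially the same route as the paper's proof: introduce $A_i(t)=\sum_{k\geq0}F_{v_k^{(i)}}(x)t^k$, exploit the triangular dependence to solve for $A_1(t),\ldots,A_{\alpha-1}(t)$ in order as rational-in-$(x,t)$ linear combinations of the $F_w(x)$, resolve the self-referential top family by an evaluation of $t$ annihilating the coefficient of $A_\alpha(t)$, and feed the resulting expressions for the length-$(m+1)$ nodes into Theorem \ref{thg2}. The one substantive divergence is your use of the exact kernel root $t_0=r_{\alpha,\alpha}/(1-r_\alpha'x)$ where the paper substitutes $t=1$ (after noting $r_{\alpha,\alpha}\in\{0,1\}$); your choice is in fact the more careful one, since $t=1$ only eliminates the unknown $A_\alpha(1)$ when $r_\alpha'=0$, and your Stanley--Wilf convergence check makes explicit a point the paper leaves implicit.
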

	\begin{proof}
We prove the statement in the case when $|\alpha|\geq 3$, as the adjustments required for the $\alpha=1,2$ cases will be apparent.  Note that $\{v_1^{(1)},v_1^{(2)},\ldots,v_1^{(\alpha)}\}$ comprises the set of all nodes in $V$ of length $m+1$.  By Theorem \ref{thg2}, it suffices to show that $F_{v_1^{(i)}}(x)$ for each $i \in [\alpha]$ is a rational linear combination of the $F_v(x)$ for $v \in V$ with $|v|\leq m$ wherein the corresponding coefficients all vanish at $x=0$.  In order to aid in doing so, we define the generating function $A_i(t)=\sum_{k\geq0}F_{v_k^{(i)}}(x)t^k$ for $1 \leq i \leq \alpha$.  From the succession rules, we have
\begin{equation}\label{thresce1}
\left(1-r_1x-\frac{x^2t}{1-xt}\right)A_1(t)=\frac{x^m}{1-xt}+\sum_{w\in W}x^{m+1-|w|}\left(\sum_{k\geq0}\M[B](v_k^{(1)},w)(xt)^k\right)F_w(x),
\end{equation}
\begin{align}
\left(1-r_{s,2}x\right)A_s(t)&=\frac{x^m}{1-xt}+\left(\frac{x^2t}{1-xt}+\frac{r_{s,1}}{t}+r_sx\right)A_{s'}(t)-r_{s,1}\frac{A_{s'}(0)}{t}\notag\\
&\quad+\sum_{w\in W}x^{m+1-|w|}\left(\sum_{k\geq0}\M[B](v_k^{(s)},w)(xt)^k\right)F_w(x), \quad 2 \leq s \leq \alpha-1, \label{thresce2}
\end{align}
and
\begin{align}
\left(1-r_\alpha'x\right)A_\alpha(t)&=\frac{x^m}{1-xt}+\left(\frac{x^2t}{1-xt}+r_\alpha x\right)A_{\alpha'}(t)+\sum_{i=1}^\alpha r_{\alpha,i}\left(\frac{A_i(t)-A_i(0)}{t}\right)\notag\\
&\quad+\sum_{w\in W}x^{m+1-|w|}\left(\sum_{k\geq0}\M[B](v_k^{(\alpha)},w)(xt)^k\right)F_w(x). \label{thresce3}
\end{align}
\indent By \eqref{thresce1}, we have that $A_1(t)$ is a linear combination of the $F_w(x)$ for $w \in W$ whose coefficients $c_j(x,t)$ are rational in $x$ and $t$ and satisfy $c_j(0,t)=0$ for all $j$ and each fixed $t$.  Then by an induction argument using \eqref{thresce2}, we have that $A_s(t)$ for $2 \leq s \leq \alpha-1$ also admits of such a form.  Note that $r_{\alpha,i}\in\{0,1\}$ for $i \in [\alpha]$ since there is at most one way to produce a certain offspring of length $k+1$ from a parent of length $k$.  If $r_{\alpha,\alpha}=0$, then \eqref{thresce3} implies, like in the prior cases, that $A_\alpha(t)$ admits of this form too.  If $r_{\alpha,\alpha}=1$, then taking $t=1$ in \eqref{thresce3}, and solving for $A_\alpha(0)$, implies $A_\alpha(0)$ has the desired form and hence $A_\alpha(t)$ does as well.  Thus, for each $i \in [\alpha]$, we have in particular that $F_{v_1^{(i)}}(x)=\frac{A_i(t)-A_i(0)}{t}\mid_{t=0}$ is a rational linear combination of the $F_v(x)$ with $|v|\leq m$ of the form stated above, which completes the proof.
\end{proof}
	
	\begin{example}
		As shown in \cite{MSS0} using different techniques, there are exactly $10$ sets of patterns of size 11 consisting of members of $S_4$ where the FinLabel algorithm fails to terminate in a  finite number of iterations, and they are given by
		\begin{align*}
			B_1=&\{1324,1423,2143,2413,3124,3142,3412,4132,4213,4231,4312\},\\
			B_2=&\{1324,1423,2143,2413,3124,3142,4123,4132,4213,4231,4312\},\\
			B_3=&\{1324,1423,2143,2413,3124,3142,3412,4123,4132,4213,4231\},\\
			B_4=&\{1324,1423,2143,3124,3142,3412,4123,4132,4213,4231,4312\},\\
			B_5=&\{1324,1423,2143,2413,3124,3142,3412,4123,4132,4231,4312\},\\
			B_6=&\{1243,1324,1342,1423,1432,2143,2413,3142,3412,4132,4231\},\\
			B_7=&\{1324,1423,2143,2413,3124,3142,3412,4123,4132,4213,4312\},\\
			B_8=&\{1324,1423,2413,3124,3142,3412,4123,4132,4213,4231,4312\},\\
			B_9=&\{1243,1324,1342,1423,1432,2143,2413,2431,3142,3412,4132\},\\
			B_{10}=&\{1234,1243,1324,1342,1423,2134,2314,2341,3124,3412,4123\}.
		\end{align*}

		The set $B_1$ was discussed in Example \ref{exT1T9}, where $\D[B_1]$ was seen to have $3$-growing paths with $V^{(1)}=\{(k-1)(k-2)k(k-3)(k-4)\cdots 1\mid k \geq 4\}$, $V^{(2)}=\{(k-1)k(k-2)(k-3)\cdots 1\mid k \geq 4\}$, $V^{(3)}=\{k(k-1)\cdots 1\mid k \geq 4\}$ and $W=\{1,12,21,132,213,231,312,321\}$. Note that
		$\sum_{v\in W}\sum_{j\geq0}\M[B](v_j^{(s)},v)x^j$ equals $(1-x)^{-1}$ when $s=1,3$ and $2(1-x)^{-1}$ when $s=2$. Hence, Theorem \ref{thresc} implies $G_{B_1}(x)$ is a rational generating function. As noted in the following table, all other cases also have $\alpha$-growing paths. One can readily show that Theorem \ref{thresc} is applicable in each of these cases as well. Additionally, following the same process as the one given in Example \ref{ex12331221543}, one can calculate $G_{B_s}(x)$ in all cases, which we omit here for the sake of brevity.\\
		
		\begin{center}
			\begin{tabular}{ |c|c|}
				\hline
				$B_s$& $\alpha$-growing paths \\
				\hline
				$B_1$,$B_2$, $B_3$, $B_4$, $B_5$, $B_{10}$  & 3  \\
				$B_6$, $B_7$ & 4 \\
				$B_8$, $B_9$ & 2  \\
				\hline
			\end{tabular}
		\end{center}
	\end{example}
	
	We conclude by mentioning some further applications of the preceding results. Algorithm \ref{alg1} together with Theorems \ref{thres1} and \ref{thresc} have been applied to many sets $B$ consisting of members of $S_4$ where $3\leq |B|\leq 12$. In our study, we have shown that there are $48$ sets of patterns of size ten for which the FinLabel algorithm fails to find $G_B(x)$ in a finite number of iterations. From these cases, exactly $10$ (resp. $19$, $10$, $2$ and $3$) cases have directed graphs with $2$-growing (resp. $3$-, $4$-, $5$- and $6$-) paths. In other words, with the exception of four cases, Theorem \ref{thresc} (and its proof) provides the solution to the problem of finding the generating function $G_B(x)$ when $|B|=10$. Interestingly enough, Theorem \ref{thres2} yields $G_B(x)$ for the four remaining cases, two of which were treated in Example \ref{exT10aa2}.

	
\end{document}